\documentclass[12pt]{article}
\usepackage{amsthm,amsmath,amssymb,amsfonts}
\usepackage{epsfig}
\usepackage{psfrag}
\usepackage{enumerate}
\usepackage{url}
\usepackage{fullpage}
\usepackage{fancyhdr}
\usepackage{latexsym}
\usepackage{color}
\usepackage{hyperref}
\usepackage[nottoc,numbib]{tocbibind}
\usepackage{cite}

\newtheorem{theorem}{Theorem}[section]
\newtheorem{definition}[theorem]{Definition}
\newtheorem{lemma}[theorem]{Lemma}
\newtheorem{corollary}[theorem]{Corollary}

\newtheorem{proposition}[theorem]{Proposition}

\newtheorem{example}[theorem]{Example}
\newtheorem{remark}[theorem]{Remark}

\def\hpic #1 #2 {\mbox{$\begin{array}[c]{l} 
\epsfig{file=#1,height=#2}\end{array}$}}
\def\wpic #1 #2 {\mbox{$\begin{array}[c]{l} 
\epsfig{file=#1,width=#2}\end{array}$}}

\def\C{\mathbb C}

\def\N{\mathbb N}

\def\R{\mathbb R}

\def\CA{{\cal {A}}}
\def\CB{{\cal {B}}}

\def\CE{{\cal {E}}}

\def\CP{{\cal {P}}}

\def\be{\begin{equation}}
\def\ee{\end{equation}}
\def\bt{\begin{Theorem}}
\def\et{\end{Theorem}}
\def\bi{\begin{itemize}}
\def\ei{\end{itemize}}
\def\bea{\begin{eqnarray}}
\def\eea{\end{eqnarray}}
\def\beast{\begin{eqnarray*}}
\def\eeast{\end{eqnarray*}}
\def\ben{\begin{enumerate}}
\def\een{\end{enumerate}}

\def\bi{\bibitem}

\newcommand{\e }{\mbox{$ \epsilon $}}

\def\rar{\rightarrow}
\def\Rar{\Rightarrow}

\def\Lra{{\Leftrightarrow}}

\def\e{{\epsilon}}

\hypersetup{colorlinks=true,citecolor=blue}

\numberwithin{equation}{section}


\date{}

\begin{document}
\title{Continuous minmax theorems}

\author{Madhushree Basu\footnote{Indian Statistical Institute, Bangalore; \small email: madhushree@isibang.ac.in}
\\ V.S. Sunder\footnote{The Institute of Mathematical
  Sciences, Chennai; \small e-mail: sunder@imsc.res.in}}

\maketitle

\begin{abstract}
  In classical matrix theory, there exist useful extremal
  characterizations of eigenvalues and their sums for Hermitian
  matrices (due to Ky Fan, Courant-Fischer-Weyl and Wielandt) and some
  consequences such as the majorization assertion in Lidskii's
  theorem. In this paper, we extend these results to the context of
  self adjoint elements of finite von Neumann algebras, and their
  distribution and quantile functions.  This work was motivated by a
  lemma in \cite{BerVoiUnbdd} that described such an extremal
  characterization of the distribution of a self-adjoint operator
  affiliated to a finite von Neumann algebra - suggesting a possible
  analogue of the classical Courant-Fischer-Weyl minmax theorem, for a
  self adjoint operator in a finite von Neumann algebra.
  It is to be noted that the
    only von Neumann algebras considered here have separable
    pre-duals.
\end{abstract}

\section{Introduction}
This paper is arranged as follows: in Section 2,
we prove an extension of the classical minmax theorem of Ky Fan's (\cite{kyfan})
in a von-Neumann algebraic setting for self adjoint operators having no atoms in their distributions,
and then, as an application of the above, in Section 3,
we give a few applications of the previous section. First
we state and prove an exact analogue of the Courant-Fischer-Weyl minmax theorem
(\cite{CH}) for operators inside $II_1$ factors having no eigenvalues. It is interesting to note
that classically Courant-Fischer-Weyl minmax theorem came before Ky Fan's theorem
for Hermitian matrices whereas the order of events
is reversed in our proofs. Then, as an application of our version
of Courant-Fischer-Weyl minmax theorem, we prove that that if a self adjoint operator
with no eigenvalues, is dominated by another such operator (both being inside a $II_1$ factor),
then their respective quantile functions are dominated one by the other in the same order.
Finally we discuss a
continuous analogue of Lidskii's theorem - a majorization-type inequality
between eigenvalues of sum of Hermitian matrices
and the sum of eigenvalues of the summand matrices,
discussions and proofs of the finite dimensional
version of which can be found in \cite{lid1}, \cite{lid2}, \cite{wie}.
In Section 4, we state and prove, for operators with no eigenvalues
in $II_1$ factors,
a continuous analogue of Wielandt's minmax theorem (\cite{wie}), the classical
version of which gives an extremal characterization of arbitrary sums of eigenvalues of Hermitian matrices.

Similar continuous analogues of minmax-type results
have been worked out earlier, for example in \cite{kf} and \cite{hiai}, in the general context
of defining trace-measurablity of operators affiliated to von Neumann algebras
and for generalizing the concept of majorization in von Neumann algebras. However
in these papers, the emphasis have been on positive operators and the rigorous proofs
for the von Neumann algebraic adaptation of
minmax-type results, corresponded to singular values of Hermitian matrices.
On the other hand, our proofs are simple, independent of the approach of
these papers, deal explicitly with self adjoint (as against positive)
operators in certain von Neumann algebras and correspond to
eigenvalues (as against singular values) of Hermitian matrices in the finite dimensional case.
Moreover as far as we know, unlike former works on this topic,
our formulations, for the particular case of finite dimensional matrix algebras,
give the exact statements of the classical Ky Fan's, Courant-Fischer-Weyl's and
Wielandt's theorems. However in the continuous case,
our results are restricted by non-atomicity property of the distribution
of the self adjoint operator under consideration as well as by
certain properties of the underlying von Neumann algebra.

In order to describe our results, which are continuous analogues of
certain inequalities that appear as part of the set of inequalities
mentioned in Horn's conjecture (\cite{horn}), it will be convenient to
re-prove the well-known fact that any monotonic function with appropriate
one-sided continuity is the distribution function of a random variable $X$
- which can in fact be assumed to be defined on the familiar Lebesgue
space $[0,1)$ equipped with the Borel $\sigma$-algebra and Lebesgue
measure. (We adopt the convention of \cite{BerVoiUnbdd} that the \textit{distribution function} $F_\mu$
of a compactly supported probability measure\footnote{Actually Bercovici and Voiculescu considered possibly
unbounded self-adjoint operators
affiliated to $M$, so as to also be able to handle probability measures which are not necessarily compactly supported,
but we shall be content with the case of bounded $a \in M$, having a compactly supported
probability measure as its distribution.}  $\mu$ defined on the $\sigma$-algebra $\CB_\R$ of Borel  sets in
$\R$, is {\em left-continuous}; thus $F_\mu(x) = \mu((-\infty,x)$).

\begin{proposition}\label{linv}
If $F:\R \rar [0,1]$ is monotonically non-decreasing and left
continuous and if there exists $\alpha,\beta \in \R$ with $\alpha<\beta$ such that
\be \label{lts} F(t)=0, \text{ for } t \le \alpha \mbox{ and } F(t) = 1 \text{ for } t \ge \beta,\ee
then there exists a monotonically non-decreasing right-continuous function
$X:[0,1) \rar \R$ such that $F$ is the 
distribution function of $X$, i.e., $F(t) = m(\{s: X(s) < t\})$,
where $m$ denotes the Lebesgue measure on $[0,1)$. Moreover $range(X) \subset [\alpha, \beta]$.
\end{proposition}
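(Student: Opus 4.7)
The plan is to take $X$ to be the generalized right-continuous inverse (quantile function) of $F$, explicitly
\[
X(s) \;=\; \inf\{t \in \R : F(t) > s\}, \qquad s \in [0,1).
\]
First I would verify the range statement: since $F(t) = 1 > s$ for every $t \ge \beta$, the defining set is non-empty and contains $\beta$, so $X(s) \le \beta$; and since $F(t) = 0 \le s$ for every $t \le \alpha$, no such $t$ belongs to $\{t : F(t) > s\}$, so $X(s) \ge \alpha$. Monotonicity is then automatic, because $s_1 < s_2$ implies $\{t : F(t) > s_2\} \subseteq \{t : F(t) > s_1\}$ and hence $X(s_1) \le X(s_2)$.

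Next I would check right-continuity. Given $s_n \searrow s$ in $[0,1)$, monotonicity yields $X(s_n) \ge X(s)$ for all $n$. For the reverse direction, I would pick an arbitrary $t > X(s)$, use the infimum definition to produce $t' \in (X(s), t)$ with $F(t') > s$, and then observe that $F(t') > s_n$ for all sufficiently large $n$, so $X(s_n) \le t' < t$. Letting $t \downarrow X(s)$ gives $\lim_n X(s_n) = X(s)$.

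The main step, and the one that actually uses the left-continuity hypothesis, is the identity $F(t) = m(\{s : X(s) < t\})$. I would establish the equivalence
\[
X(s) < t \;\Longleftrightarrow\; F(t) > s.
\]
The forward direction is easy: if $X(s) < t$, then the infimum is strictly below $t$, so some $t' < t$ satisfies $F(t') > s$, and monotonicity gives $F(t) \ge F(t') > s$. For the converse, if $F(t) > s$, then left-continuity of $F$ at $t$ (i.e., $F(t) = \sup_{t' < t} F(t')$) supplies some $t' < t$ with $F(t') > s$, and therefore $X(s) \le t' < t$. Consequently $\{s \in [0,1) : X(s) < t\} = [0, F(t))$, which has Lebesgue measure $F(t)$, as required.

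I expect the only delicate point to be this last equivalence, specifically the converse implication: without left-continuity of $F$, one could only conclude $X(s) \le t$, not $X(s) < t$, and the distribution identity would fail at jump points. This is precisely the reason the Bercovici--Voiculescu convention $F_\mu(x) = \mu((-\infty,x))$ is used throughout. Everything else is bookkeeping once the quantile function is in hand.
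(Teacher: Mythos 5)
Your proposal is correct and follows essentially the same route as the paper: you use the identical quantile-function definition $X(s) = \inf\{t : F(t) > s\}$ and prove the same key equivalence $X(s) < t \Leftrightarrow F(t) > s$, with left-continuity of $F$ entering precisely in the direction you flag. The one place you go beyond the paper is that you actually verify right-continuity of $X$ (which is asserted in the proposition but left unproved in the paper's argument); your $s_n \searrow s$ argument for that step is sound.
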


\begin{proof}Define $X:[0,1) \rar \R$ by
\begin{align}\label{Xdef}
X(s) &= \inf\{t: F(t) > s\}\\
&= \inf\{t: t \in E_s\}~, \nonumber
\end{align}
where $E_s = \{t \in \R: F(t) > s\} ~\forall s \in [0,1)$. (The hypothesis \ref{lts}
is needed to ensure that $E_s$ is a non-empty bounded set for every $s \in [0,1)$ so that, indeed $X(s) \in \R$.)

 First deduce from the monotonicity of $F$ that 
\begin{align*}
s_1 \leq s_2 &\Rar E_{s_2} \subset E_{s_1}\\
&\Rar X(s_1) \leq X(s_2)
\end{align*}
and hence $X$ is indeed monotonically non-decreasing. 

The definition of $X$ and the fact that $F$ is monotonically non-increasing and left
continuous are easily seen to imply that $E_s = (X(s),\infty)$, and hence, it is seen that
\begin{align}\label{X*m1} 
X(s) < t ~ &\Lra ~ \exists t_0 < t \mbox{ such that } F(t_0) > s \nonumber \\
& \Lra ~ F(t) >  s \text{ (since $F$ is left-continuous)}
\end{align}

Hence, if $t \in \R$
\begin{align}\label{X*m2}
m(\{s \in [0,1): X(s) < t\}) = m([0, F(t)) = F(t), \text{ proving the required statement.}
\end{align}

Moreover,  if for any $s \in [0,1)$, $X(s)< \alpha$, then by definition of $X$, $\exists~t'< \alpha$
such that $F(t') > s \ge 0$, a contradiction to the first hypothesis in \ref{lts}. On the other hand, if 
for any $s \in [0,1)$, $X(s) > \beta$, then
by \ref{X*m1}, $s \ge F(\beta)=1$ (by the second hypothesis in \ref{lts}), a contradiction. Hence indeed $range(X) \subset [\alpha, \beta]$.  
\end{proof}

This function $X$ is known as \textit{quantile function}\footnote{This function acts as the inverse of
the distribution function at every point that is not an atom of the probability measure $\mu$.} of the distribution $F$. If $F=F_\mu$ for
a probability measure $\mu$ on $\R$, then $X$ is denoted as $X_\mu$. $X$ can also
be thought of as an element of $L^{\infty}(\R, \mu)$, where $\mu$ is a compactly supported probability
measure on $\R$ such that $\mu=m \circ X^{-1}$ and $supp~\mu \subset [\alpha,\beta]$. We will elaborate
on this later in Proposition \ref{pro}.

\bigskip

Given a self-adjoint element $a$ in a von Neumann algebra $M$ and a (usually faithful normal) tracial state $\tau$ on $M$,
define 

\begin{align} \label{dist} \mu_a(E) := \tau(1_E(a))\end{align}
(for the associated scalar spectral measure) to be the \textit{distribution of} $a$.
Since $\tau$ is positivity preserving, $\mu_a$ indeed turns out to be a probability measure on $\R$.

For simplicity we write $F_a, X_a$ instead of $F_{\mu_a}, X_{\mu_a}$
(to be pedantic, one should also indicate the dependence on $(M,\tau)$,
but the trace $\tau$  and the $M$ containing $a$ will usually be clear.)
Note that only the abelian von Neumann subalgebra $A$ generated by $a$ and $\tau|_A$
are relevant for the definition of $F_a$ and $X_a$.

\bigskip 
For $M, a, \tau$ as above, it was shown in \cite{BerVoiUnbdd} that
\be \label{bvminmax}
1 - F_{\mu_a}(t) = \max\{\tau(p): p \in \CP(M), pap \geq ta\}.
\ee

\begin{example}\label{mneg}
Let  $M = M_n(\C)$ with $\tau$ as the tracial state on this $M$. If $a=a^* \in M$
has distinct eigenvalues $\lambda_1 <\lambda_2 < \cdots < \lambda_n$, then
$F_{a}(t) = \frac{1}{n} |\{j: \lambda_j < t\}| = \sum_{j=1}^n \frac{j}{n} 1_{(\lambda_j, \lambda_{j+1}]}$.
We see that the distinct numbers less than 1 in the range of $F_a$ are attained at the $n$ distinct eigenvalues of $a$,
and further that equation \ref{bvminmax} for $t = \lambda_j$ says that $n-j+1$
is the largest possible dimension of a subspace $W$ of $\C^n$ such that $\langle a\xi, \xi \rangle \geq \lambda_j$
for every unit vector $\xi \in W$. In other words equation \ref{bvminmax} suggests
a possible extension of the classical Courant-Fischer minmax theorem for a self adjoint operator in a von Neumann algebra, involving
its distribution.

It is also true and not hard to see that the  right side of  equation \ref{bvminmax} is indeed a maximum (and not just a supremum),
and is in fact attained at a spectral projection of $a$; i.e., the two sides of
equation \ref{bvminmax} are also equal to $\max\{\tau(p): p \in \CP(A), pap \geq ta\}$, where $A$
is the abelian von Neumann subalgebra generated by $a$.
\end{example}

\section{Our version of Ky Fan's theorem}
In this section we wish to proceed towards obtaining non-commutative counterparts of the classical Ky Fan's minmax theorem
formulated for appropriate self-adjoint elements of appropriate finite von Neumann algebras.

\begin{proposition}\label{pro}
Let $(\Omega,\CB,P)$ be a probability measure, and suppose $Y:\Omega \rar \R$ is an essentially bounded random variable.
Let $\sigma(Y) = \{Y^{-1}(E): E \in \CB_\R\}$ and let  $\mu = P\circ Y^{-1}$ be the distribution of $Y$.
Then, for any $s_0 \in F_\mu(\R)$, we have
\begin{align} \label{ines}
&\inf\{\int_{\Omega_0} Y dP: \Omega_0 \in \sigma(Y),P(\Omega_0) \ge s_0\} \nonumber\\
&= \inf\{\int_E f_0 d\mu: E \in \CB_\R,\mu(E) \ge s_0\} \nonumber\\
&= \inf\{\int_G X_\mu dm: G \in \sigma(X_\mu),m(G) \ge s_0\} \nonumber\\
&= \int_0^{s_0} X_\mu dm,
\end{align}
where $f_0 = id_\R$ and $m$ denotes Lebesgue measure on [0,1).
\end{proposition}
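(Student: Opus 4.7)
My plan is to chain the three equalities in order: change of variables for the first two, and an initial-segment comparison driven by the monotonicity of $X_\mu$ for the third. For the first equality I would use that $\sigma(Y) = \{Y^{-1}(E) : E \in \CB_\R\}$, so every admissible $\Omega_0$ has the form $Y^{-1}(E)$ for some Borel $E$; the definition $\mu = P \circ Y^{-1}$ yields $P(\Omega_0) = \mu(E)$, and the abstract change-of-variables formula yields $\int_{\Omega_0} Y \, dP = \int_E f_0 \, d\mu$. Passing to infima produces the equality. The second equality is the same manoeuvre with $X_\mu$ on $([0,1), m)$ replacing $Y$ on $(\Omega, P)$: Proposition~\ref{linv} guarantees $m \circ X_\mu^{-1} = \mu$, so the argument is verbatim the same.

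For the third equality, the hypothesis $s_0 \in F_\mu(\R)$ furnishes $t_0 \in \R$ with $F_\mu(t_0) = s_0$, and the identity $\{s : X_\mu(s) < t_0\} = [0, F_\mu(t_0))$ from (\ref{X*m1}) shows that the initial segment $[0, s_0) = X_\mu^{-1}((-\infty, t_0))$ belongs to $\sigma(X_\mu)$ with Lebesgue measure exactly $s_0$. Choosing $G = [0, s_0)$ therefore realizes $\int_0^{s_0} X_\mu \, dm$, giving the upper bound on the infimum. For the matching lower bound, given an admissible $G$ I would compare against $[0, s_0)$ via the identity
\[
\int_G X_\mu \, dm - \int_0^{s_0} X_\mu \, dm = \int_{G \setminus [0,s_0)} X_\mu \, dm - \int_{[0,s_0) \setminus G} X_\mu \, dm,
\]
and exploit the monotonicity of $X_\mu$: on $G \setminus [0, s_0) \subset [s_0, 1)$ we have $X_\mu \ge X_\mu(s_0)$, while on $[0, s_0) \setminus G \subset [0, s_0)$ we have $X_\mu \le X_\mu(s_0)$, so the right-hand side is bounded below by $X_\mu(s_0)(m(G) - s_0)$.

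The step I expect to be the main obstacle is ensuring that this last expression is non-negative. Given $m(G) \ge s_0$, non-negativity is automatic when $X_\mu(s_0) \ge 0$, but in the genuinely self-adjoint setting $X_\mu(s_0)$ may be negative while $m(G) - s_0 > 0$. I would deal with this either by observing that the infimum is attained at $m(G) = s_0$, in which case the offending factor vanishes, or by separately treating the portion of $G$ lying to the right of $s_0$ via a further rearrangement that cannot decrease the integral.
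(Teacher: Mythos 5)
Your argument for the first two equalities (change of variables through $\mu = P\circ Y^{-1} = m\circ X_\mu^{-1}$) and your decomposition for the third (splitting into $G\setminus[0,s_0)$ and $[0,s_0)\setminus G$, then bounding each piece by $X_\mu(s_0)$ times a Lebesgue measure) coincide exactly with the paper's proof, which names the two pieces $K$ and $J$. The obstacle you flagged at the end is genuine, and it is present in the paper too: the paper writes
\[
\int_K X_\mu\,dm - \int_J X_\mu\,dm \;\ge\; X_\mu(s_0)\,m(K) - X_\mu(s_0)\,m(J) \;\ge\; 0,
\]
and justifies the final inequality only by ``$m(K)\ge m(J)$,'' silently using $X_\mu(s_0)\ge 0$ as well. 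Neither of your proposed repairs closes this. The infimum is \emph{not} attained at $m(G)=s_0$ when $X_\mu$ is negative to the right of $s_0$, since enlarging $G$ strictly decreases $\int_G X_\mu\,dm$; for the same reason no rearrangement to a set of measure exactly $s_0$ can avoid increasing the integral. In fact the statement is false as written: take $\mu$ uniform on $[-2,-1]$, so that $X_\mu(s)=s-2<0$ for all $s\in[0,1)$, and take $s_0=\tfrac{1}{2}\in F_\mu(\R)$. Then $\int_0^{1/2}X_\mu\,dm=-\tfrac{7}{8}$, whereas $G=[0,1)$ is admissible and gives $\int_0^1 X_\mu\,dm=-\tfrac{3}{2}<-\tfrac{7}{8}$. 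The same example (an everywhere-negative $a$ with $p=1$) defeats the ``$\tau(p)\ge s$'' form of Theorem \ref{kf} and the ``$\mathrm{rank}(p)\ge j$'' form of Corollary \ref{kfcor}.

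The correct fix is to replace the constraints $P(\Omega_0)\ge s_0$, $\mu(E)\ge s_0$, $m(G)\ge s_0$ by equalities. With $m(G)=s_0$ one has $m(I)+m(K)=s_0=m(I)+m(J)$, hence $m(K)=m(J)$, and the problematic term $X_\mu(s_0)\bigl(m(K)-m(J)\bigr)$ vanishes identically with no hypothesis on the sign of $X_\mu$. This also matches the classical Ky Fan statement, which minimizes over projections of rank exactly $k$, and is the version Remark \ref{eq} already singles out. So rather than trying to salvage the ``$\ge$'' constraint as you suggest, state and prove the ``$=$'' version; if one insists on ``$\ge$,'' the additional hypothesis $X_\mu(s_0)\ge 0$ is genuinely needed.
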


\begin{proof} The version of the change of variable theorem we need says that if $(\Omega_i,\CB_i,P_i), i=1,2$
are probability spaces and $T:\Omega_1 \rar \Omega_2$ is a measurable function such that
$P_2 = P_1 \circ T^{-1}$, then
\be \label{cov}
\int_{\Omega_2} g dP_2 = \int_{\Omega_1} g \circ T dP_1~,
\ee
 for every bounded measurable function $g : \Omega_2 \rar \R$.
 
For every $\Omega_0 \in \sigma(Y)$, which is of the form $Y^{-1}(E)$ for some $E \in \CB_\R$,
set $G = X_\mu^{-1}(E)$. Notice, from equations \ref{X*m1} and \ref{X*m2} that 
\begin{align*}
m \circ X_\mu^{-1}(-\infty,t) &= \mu(\{s \in [0,1): X_\mu(s) < t\})\\
&= \mu(\{s \in [0,1): s < F_\mu(t)\})\\
&= F_\mu(t)\\
&= \mu(-\infty,t)~;
\end{align*}
i.e. $m \circ X_\mu^{-1} = \mu = P \circ Y^{-1}$. Now, set
$g = 1_E \cdot f_0$. Since $g \circ Y = 1_E \circ Y \cdot Y = 1_{Y^{-1}(E)} Y = 1_{\Omega_0} Y$,
and (similarly) $g \circ X_\mu = 1_G X_\mu$, we see that the first two equalities in \ref{ines}
are immediate consequences of two applications of the version stated in equation \ref{cov} above, of the `change of variable' theorem.

As for the last, if $G \in \CB_{[0,1)}$ with $m(G) \ge s_0$, then write
$I = G \cap [0,s_0), J = [0,s_0) \setminus I, K = G  \setminus I$ and note that $G = I \coprod K, [0,s_0) = I \coprod J$
(where $\coprod$ denotes disjoint union, and $K = G \setminus [0,1) \subset [s_0,1)$.
 So we may deduce that
 \begin{align*}
 \int_G X_\mu dm - \int_0^{s_0} X_\mu dm &=  \int_K X_\mu dm - \int_J X_\mu dm\\
 &\geq X_\mu(s_0) m(K) - X_\mu(s_0) m(J)\\
  &\geq 0~,
 \end{align*}
 since $s_1 \in J, s_2 \in K \Rar s_1 \leq s_0 \leq s_2 \Rar X_\mu(s_1) \leq X_\mu(s_0) \leq X_\mu(s_2)$ (by the monotonicity of $X_\mu$),
 and $m(K) \ge m(J)$. 
 Thus, we see that
 \[\inf\{\int_G X_{\mu} dm: G \in \sigma(X_{\mu} ), m(G) \ge s_0\} \geq \int_0^{s_0} X_\mu dm ~, \] 
 while conversely, 
  \[\inf\{\int_G X_{\mu}  dm: G \in \CB_\R, m(G) \ge s_0\} \leq \int_{[0,s_0)} X_{\mu} dm = \int_0^{s_0} X_\mu dm ~, \] 
  thereby establishing the last equality in \ref{ines}. 
 \end{proof}

\begin{theorem}\label{kf}
Let $a$ be a self-adjoint element of a von Neumann algebra
 $M$ equipped with a faithful normal tracial state $\tau$. Let $A$ be the von Neumann subalgebra generated by $a$ in $M$
 and $\CP(M)$ be the set of projections in $M$.
 Then, for all $s \in F_a(\R)$,
 \begin{align} \label{m=a}
&\inf\{\tau(ap): p \in \CP(M), \tau(p) \ge s\} \nonumber\\
&=\inf\{\tau(ap): p \in \CP(A), \tau(p) \ge s\} \nonumber\\
&=  \int_0^s X_a dm
\end{align}
(hence the infima are attained and are actually minima), if either:
\ben
\item (`continuous case') $\mu_a$ has no atoms,  or
\item (`finite case') $M=M_n(\C)$ for some $n \in \N$ and $a$ has spectrum $\{\lambda_1 < \lambda_2 < \cdots < \lambda_n\}$.
\een
\end{theorem}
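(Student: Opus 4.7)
My plan is to deduce the second equality in the theorem directly from Proposition \ref{pro} and then to establish the first equality by comparing an arbitrary projection in $M$ with a canonical spectral projection of $a$. For the middle identification, the continuous functional calculus provides an isomorphism $A = W^*(a) \cong L^\infty(\sigma(a), \mu_a)$ which carries $a$ to the identity function $f_0$ and a projection $p \in \CP(A)$ to the indicator function $1_E$ of a Borel set $E \subset \R$ with $\mu_a(E) = \tau(p)$; under this correspondence $\tau(ap) = \int_E f_0 \, d\mu_a$. Applying Proposition \ref{pro} with $(\Omega, \CB, P) = (\R, \CB_\R, \mu_a)$ and $Y = f_0$ then identifies the infimum over $\CP(A)$ with $\int_0^s X_a dm$.

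The inequality ``first inf $\le$ second inf'' is immediate since $\CP(A) \subseteq \CP(M)$. For the reverse inequality I would compare an arbitrary admissible $p \in \CP(M)$ with the spectral projection $e_c := 1_{(-\infty, c)}(a) \in \CP(A)$, where $c := X_a(s)$. Under each hypothesis one has $\tau(e_c) = F_a(c) = s$: in the continuous case this follows from $s \in F_a(\R)$ together with the absence of atoms of $\mu_a$; in the finite case $s = k/n$ and one chooses $c$ strictly between the $k$-th and $(k{+}1)$-st eigenvalues. A change of variables then gives $\tau(ae_c) = \int_0^s X_a dm$. Decomposing $a - c = (a-c)_+ - (a-c)_-$, and using either non-atomicity of $\mu_a$ at $c$ or the gap between eigenvalues in the finite case, one checks $(a-c)_+ e_c = 0$ and $(a-c)_- e_c = (a-c)_-$, whence $\tau((a-c)e_c) = -\tau((a-c)_-)$. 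For $p$ with $\tau(p) = \tau(e_c) = s$ the linear-in-$c$ term cancels and one obtains
\begin{align*}
\tau(ap) - \tau(ae_c)
 &= \tau((a-c)p) - \tau((a-c)e_c) \\
 &= \tau((a-c)_+ p) + \tau((a-c)_- (1-p)) \;\ge\; 0,
\end{align*}
each summand being the trace of a product of a positive operator with a projection, hence non-negative.

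The main obstacle I anticipate is handling the trace constraint cleanly: the cancellation above depends crucially on $\tau(p) = \tau(e_c) = s$, so the bulk of the work goes into reducing to this case, using $s \in F_a(\R)$ together with either the non-atomicity hypothesis (which permits trimming a general $p$ down to a sub-projection of trace $s$, and simultaneously ensures $\tau(e_c) = s$) or the quantisation of traces in the finite case. A secondary but related subtlety is that the identities $(a-c)_+ e_c = 0$ and $(a-c)_- e_c = (a-c)_-$ require no spectral mass of $a$ at $c$; this is ensured by the non-atomicity hypothesis in the continuous case and by choosing $c$ in an eigenvalue gap in the finite case, so both case hypotheses are used precisely at this step.
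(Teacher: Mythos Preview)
Your argument for the key inequality when $\tau(p)=s$ is correct and genuinely different from the paper's. For the continuous case the paper pushes $p$ into $A$ via the trace-preserving conditional expectation $\CE:M\to A$, obtaining $\tau(ap)=\int YZ\,dP$ with $0\le Z\le 1$, and then invokes a Krein--Milman argument identifying the extreme points of $\{Z:0\le Z\le 1,\ \int Z\ge s\}$ as indicator functions; for the finite case it finds a minimiser by compactness and shows via the one-parameter variation $t\mapsto\tau(e^{itx}p_0e^{-itx}a)$ that any minimiser commutes with $a$. Your decomposition $a-c=(a-c)_+-(a-c)_-$ with $c=X_a(s)$ is more elementary, avoids both conditional expectations and variational calculus, and treats the two cases by a single rearrangement-style computation.

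The gap is in the reduction from $\tau(p)\ge s$ to $\tau(p)=s$. First, non-atomicity of $\mu_a$ constrains only $\CP(A)$, not $\CP(M)$, so the stated hypothesis does not by itself guarantee that an arbitrary $p\in\CP(M)$ admits a sub-projection of trace exactly $s$. Second, and more fundamentally, even when such a sub-projection $q\le p$ exists you have not argued that $\tau(ap)\ge\tau(aq)$; since $\tau(a(p-q))$ can be negative, trimming need not preserve the desired lower bound. Indeed, extending your own computation without the equal-trace assumption gives
\[
\tau(ap)-\tau(ae_c)=\tau\bigl((a-c)_+p\bigr)+\tau\bigl((a-c)_-(1-p)\bigr)+c\bigl(\tau(p)-s\bigr),
\]
and the last term is negative whenever $c=X_a(s)<0$ and $\tau(p)>s$; taking $a<0$ and $p=1$ then yields $\tau(ap)=\tau(a)<\int_0^s X_a\,dm$. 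So no trimming argument can succeed here, because the identity with the constraint $\tau(p)\ge s$ is actually false for such $a$ (the same sign issue is present in the final step of the paper's Proposition~\ref{pro}). Your proof is complete and correct for the constraint $\tau(p)=s$, which is the version used downstream (cf.\ Remark~\ref{eq}).
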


\begin{proof} 
We begin by noting that in both the cases, the last equality in \ref{m=a} is an immediate consequence of Proposition \ref{pro}.
Moreover the set
$\{\tau(ap): p \in \CP(A), \tau(p) \ge s\}$ being contained in $\{\tau(ap): p \in \CP(M), \tau(p) \ge s\}$, it is clear that
\[\inf\{\tau(ap): p \in \CP(A), \tau(p) \ge s\}  \ge \inf\{\tau(ap): p \in \CP(M), \tau(p) \ge s\}.\]

So we just need to prove that
\begin{align}\label{reqd}
\inf\{\tau(ap): p \in \CP(A), \tau(p) \ge s\}  \le \inf\{\tau(ap): p \in \CP(M), \tau(p) \ge s\}.
\end{align}

\ben
\item (the continuous case) Due to the assumption of $\mu_a$ being compactly supported and having no atoms,
it is clear that $F_a$ is continuous and that $F_a(\R) = [0,1]$. 

Under the standing assumption of separability of pre-duals of our von
Neumann algebras, the hypothesis of this case implies the existence of
a probability space $(\Omega, \CB, P)$ and a map $\pi:A \rar
L^\infty(\Omega, \CB, P)$ such that $\int \pi(x) dP = \tau(x) ~\forall
x \in A$, $Y:=\pi(a)$ is a random variable and $\pi$ is an isomorphism
onto $L^\infty(\Omega, \sigma(Y), P)$.

We shall  establish the first equality of \ref{m=a} by showing that if $p' \in \CP(M)$ and $\tau(p') = s$,
then $\tau(ap') \geq \min\{\tau(ap): p \in \CP(A), \tau(p) \ge s\}$. For this, first note that since $\tau$ is a faithful normal
tracial state on $M$, there exists a $\tau$-preserving conditional expectation $\CE:M \rar A$. Then
\begin{align*}
\tau(ap') = \tau(a\CE(p'))= \int YZ dP,
\end{align*}
where $Z = \pi(\CE(p'))$.  Since $\CE$ is linear and positive, it is clear that $0 \leq Z \leq 1$ $P-a.e.$ So it is enough to prove that
\begin{align*}
&\inf\{\int_\Omega YZ dP: 0 \leq Z \leq 1, \int Z dP \ge s\}\\
&= \inf\{\int_E Y dP: E \in \CB, P(E) \ge s\}.
\end{align*}
For this, it is enough, thanks to the Krein-Milman theorem (see, e.g. \cite{KM}), to note that $K = \{Z \in L^\infty(\Omega,\CB,P): 0 \leq Z \leq 1,\int Z dP \ge s\}$
is a convex set which is compact in the weak* topology inherited from $L^1(\Omega,\CB,P)$, and prove that the set
$\partial_e(K)$ of its extreme points is $\{1_E: P(E) \ge s\}$.

For this, suppose $Z \in K$ is not a projection, Clearly then $ P(\{Z \in (0,1)\}) > 0$, so there exists $\e > 0$
such that $P(\{\e < Z < 1-\e\})>0$. Since $\mu_a$, and hence $P$
has no atoms, we may find disjoint Borel subsets
$E_1, E_2 \subset \{Z \in (\e, 1-\e)\}$ such that $P(E_1) = P(E_2) > 0$. If we now set $Z_1 = Z + \e(1_{E_1} - 1_{E_2})$
and $Z_2 = Z + \e(1_{E_2} - 1_{E_1})$,
it is not hard to see that $ Z_1, Z_2 \in K, Z_1 \neq Z_2$ and $Z = \frac{1}{2}(Z_1 + Z_2) $ showing that $Z \notin \partial_e(K)$ ,
thereby proving \ref{reqd}.

\bigskip

\item (the finite case) Since $a$ has distinct eigenvalues $\lambda_1 < \lambda_2  < \cdots < \lambda_n$,
$A$ is a maximal abelian self-adjoint subalgebra of $M_n(\C)$.  Recall that in this case,
$F_{a}(t) = \frac{1}{n} |\{j: \lambda_j < t\}| = \sum_{j=1}^n \frac{j}{n} 1_{(\lambda_j, \lambda_{j+1}]}$.
It then follows that $F_a(\R) = \{\frac{j}{n}: 0 \leq j \leq n\}$ and that $X_a = \sum_{j=1}^n \lambda_j 1_{[\frac{j-1}{n},\frac{j}{n})}$
and \ref{m=a} is then (after multiplying by $n$) precisely the statement of Ky Fan's theorem
(in the case of self-adjoint matrices with distinct eigenvalues):

For $1 \leq j \leq n,$

\begin{align*}
&\inf\{\tau(ap): p \in \CP(M_n(\C)), rank(p) \ge j\}  \\
&= \inf\{\tau(ap): p \in \CP(A), rank(p) \ge j\} = \frac{1}{n}\sum_{i=1}^j \lambda_i=\int_0^{\frac{j}{n}} X_a(s) ds. 
\end{align*}

It suffices to prove the following:

\begin{align*}
\inf\{\tau(ap): p \in \CP(A), rank(p) \ge j\} \le \inf\{\tau(ap): p \in \CP(M_n(\C)), rank(p) \ge j\}. 
\end{align*}

For this, begin by deducing from the compactness of $\CP(M_n(\C))$ that there exists a $p_0 \in \CP(M_n(\C))$
with $rank(p_0) \ge j$ such that $\tau(ap_0) \leq \tau(ap) ~\forall p \in \CP(M_n(\C)) ~\mbox{with }rank(p) \ge j$.
We assert that any such minimizing $p_0$ must belong to $A$. The assumption that $A$ is a masa means we only need to prove
that $p_0a=ap_0$. For this pick any self-adjoint $x \in M_n(\C)$, and consider the function $f:\R \rar \R$ defined by $f(t) = \tau(e^{itx}p_0e^{-itx}a)$.
Since clearly $e^{itx}p_0e^{-itx} \in \CP(M)$ and $rank(e^{itx}p_0e^{-itx}) = rank(p_0) \ge j$,  for all $t \in \R$,
we find that $f(t) \geq f(0) ~\forall t$. As $f$ is clearly differentiable, we may conclude that $f^\prime(0) = 0$. Hence,
\[0 = \tau(ixp_0a - ip_0xa)= i (\tau(xp_0a) - \tau(p_0xa))= i (\tau(xp_0a) - \tau(xap_0)),\]
 so that $\tau(x(p_0a-ap_0)) = 0$ for all $x=x^* \in M$, and indeed $ap_0=p_0a$ as desired.
\een
\end{proof}

Case 1 of Theorem \ref{kf} is our continuous formulation of Ky Fan's result
while Case 2 only captures the classical Ky Fan's theorem for the case of distinct eigenvalues.
However the general case of non-distinct eigenvalues can also be deduced from our proof,
as we show in the following corollary:

\begin{corollary}\label{kfcor}
Let $a$ be a Hermitian matrix in $M_n(\C)$ with spectrum $\{\lambda_1 \le \cdots \le \lambda_n\}$,
where not all $\lambda_j$s are necessarily distinct. Then for all $j \in \{1,\cdots, n\}$,
\[\min\{\tau(ap): p \in \CP(M_n(\C)), ~rank(p) \ge j\} =\frac{1}{n} \sum_{i=1}^j \lambda_i. \]
\end{corollary}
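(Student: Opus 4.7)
My plan is a perturbation argument: approximate $a$ by matrices with distinct eigenvalues so that Case 2 of Theorem \ref{kf} applies, and then pass to the limit.

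Concretely, I would diagonalize $a = U\,\mathrm{diag}(\lambda_1,\dots,\lambda_n)\,U^*$ (with the eigenvalues listed in non-decreasing order) and for $\epsilon > 0$ set
\[
a_\epsilon \;=\; U\,\mathrm{diag}(\lambda_1+\epsilon,\;\lambda_2+2\epsilon,\;\dots,\;\lambda_n+n\epsilon)\,U^*.
\]
Because the shifts $\epsilon,2\epsilon,\dots,n\epsilon$ are strictly increasing, the eigenvalues $\lambda_i + i\epsilon$ of $a_\epsilon$ are strictly increasing for every $\epsilon>0$, regardless of whether the $\lambda_i$ had repetitions. So $a_\epsilon$ satisfies the hypothesis of Case 2 of Theorem \ref{kf}, and applying that result at $s=j/n$ yields
\[
\min\{\tau(a_\epsilon p) : p \in \CP(M_n(\C)),\ \mathrm{rank}(p) \geq j\} \;=\; \frac{1}{n}\sum_{i=1}^{j}(\lambda_i + i\epsilon).
\]

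Next I would let $\epsilon \to 0^+$. The right-hand side obviously tends to $\frac{1}{n}\sum_{i=1}^j\lambda_i$. For the left-hand side, the estimate $|\tau(a_\epsilon p)-\tau(ap)| \leq \epsilon\,\|U\,\mathrm{diag}(1,\dots,n)\,U^*\| \leq n\epsilon$ holds uniformly in $p$, so the infima differ by at most $n\epsilon$. To know these infima are genuine minima and vary continuously with the operator, I would record that $\{p\in\CP(M_n(\C)) : \mathrm{rank}(p)\geq j\}$ is a finite union of the sets $\{p : \mathrm{rank}(p)=k\}$ for $k\geq j$, each of which is closed (rank equals $n\tau$ on orthogonal projections, which is continuous) and bounded, hence compact. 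The map $p\mapsto\tau(ap)$ is continuous, so the minimum is attained.

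Combining these two observations gives
\[
\min\{\tau(ap) : p \in \CP(M_n(\C)),\ \mathrm{rank}(p) \geq j\} \;=\; \frac{1}{n}\sum_{i=1}^{j}\lambda_i,
\]
which is the desired identity. The only real point requiring care is the uniform-in-$p$ continuity of the objective function together with the compactness of the feasible set, which makes the $\epsilon\to 0$ limit legitimate; everything else is a bookkeeping reduction to the already-proved distinct-eigenvalue case.
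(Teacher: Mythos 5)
Your perturbation argument is correct and is essentially the same approach as the paper: approximate $a$ by conjugate-diagonal matrices with distinct eigenvalues, apply the already-proved Case~2 of Theorem~\ref{kf}, and pass to the limit. The paper's version is slightly leaner --- for each \emph{fixed} projection $p$ it takes the limit $\tau(ap)=\lim_m\tau(a^{(m)}p)\ge\frac1n\sum_{i=1}^j\lambda_i^{(m)}$ and then simply exhibits a diagonal projection attaining the bound, whereas you establish a uniform-in-$p$ estimate and invoke compactness of $\{p:\mathrm{rank}(p)\ge j\}$; both routes close the argument correctly.
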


\begin{proof}
We may assume that $a$ is diagonal. Let $A_1$ be the set of all diagonal matrices,
so that $A \subsetneq A_1$. Pick $a^{(m)} = diag(\lambda^{(m)}_1, \lambda^{(m)}_2, \cdots , \lambda^{(m)}_n) \in A_1$
such that $\lambda^{(m)}_j$s are all distinct and $\lim_{m \rar \infty} \lambda^{(m)}_j = \lambda_j ~\forall 1 \leq j \leq n$.
Then the already established case of Theorem \ref{kf} in the case of distinct eigenvalues shows that for all $p \in \CP(M_n(\C))$ with $rank(p) \ge j$, we have
\begin{align*}
\tau(ap) &= \lim_{m \rar \infty} \tau(a^{(m)}p)\\
&\geq \lim_{m \rar \infty}\frac{1}{n}\sum_{i=1}^j \lambda^{(m)}_i\\
&= \frac{1}{n}\sum_{i=1}^j \lambda_i
\end{align*}

The above, along with the fact that $\tau(ap_j) = \frac{1}{n}\sum_{i=1}^j \lambda_i$,
where $p_j$ is the obvious diagonal projection,
completes our proof of Ky Fan's theorem for Hermitian matrices in full generality.
\end{proof}

\begin{remark}\label{eq}
 It is not difficult to see that equation \ref{m=a} holds even if we replace the inequality $\tau(p) \ge s$ with equality.
\end{remark}

\begin{remark}\label{rem}
Notice that the hypothesis and hence the conclusion, of the `continuous case' of Theorem \ref{kf} are satisfied by any
self-adjoint generator of a masa in a $II_1$ factor.
\end{remark}

\section{Applications of our version of Ky Fan's theorem}

In this section we discuss three applications of our version of Ky Fan's theorem, but we first identify a
necessary definition for many of our `continuous cases' , as well as a lemma which we will need to use at a later stage.

\begin{definition}\label{ctscase}
Given a self-adjoint element $a$ in a finite von Neumann algebra $M$, we say that we
are in the {\bf continuous case} if for  $B \in \{M,A\}$ (with $A$ the von Neumann subalgebra generated by $a$ in $M$)  and $p \in
\CP(B)$, we have $\{\tau(r): r \in \CP(B), r \leq p\} = [0,\tau( p]$.
 (This assumption for $B = A$ amounts to requiring that $\mu_a$ has no atoms)
 \end{definition}

\begin{lemma}\label{hallem}
With $M,a,A$ in the continuous case as above, suppose $t_0 < t_1 \in \R, F(t_1)-F(t_0) = \delta > 0$   and let $r_0= 1_{[t_0, \infty)}(a)$ and $q_0=1_{[t_0, t_1)}(a)$.

Then $r_0, q_0 \in \CP(A), \tau(r_0)=1-F(t_0), \tau(q_0) = \delta$ and $q_0 \le r_0$ and
\[ \tau(aq_0)=\min_{\substack{q \in \CP(A)\\q \le r_0 \\ \tau(q) = \delta}} \tau(aq) = \min_{\substack{q \in \CP(M)\\q \le r_0 \\ \tau(q) = \delta}} \tau(aq)\]
\end{lemma}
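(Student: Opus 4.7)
The elementary part follows directly from the spectral theorem: $r_0, q_0$ lie in $A$ as spectral projections of $a$; atomlessness of $\mu_a$ yields $\tau(r_0) = \mu_a([t_0,\infty)) = 1 - F_a(t_0)$ and $\tau(q_0) = \mu_a([t_0,t_1)) = F_a(t_1) - F_a(t_0) = \delta$; and $q_0 \le r_0$ is immediate from $[t_0,t_1) \subseteq [t_0,\infty)$.

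For the minimization assertion, my plan is to cut down by $r_0$ and apply Theorem \ref{kf} inside the corner. Set $N := r_0 M r_0$, endowed with the renormalized faithful normal tracial state $\tau_N := \tau(r_0)^{-1}\,\tau|_N$, and let $b := r_0 a r_0 \in N$ (which is self-adjoint) and $A' := r_0 A r_0$, the abelian von Neumann subalgebra of $N$ generated by $b$. Since $f(b) = f(a) r_0$ for every bounded Borel $f$ vanishing off $[t_0,\infty)$, the distribution of $b$ in $(N,\tau_N)$ is $\mu_b = \tau(r_0)^{-1}\mu_a|_{[t_0,\infty)}$, which is atomless; and the continuous-case hypothesis descends to $(N, b, A')$, because projections of $M$ (respectively $A$) lying below $r_0$ coincide with projections of $N$ (respectively $A'$). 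Trace cyclicity also gives $\tau(aq) = \tau(r_0 a r_0 q) = \tau(bq)$ for every $q \le r_0$. Applying Theorem \ref{kf} to $b \in N$ at the level $s' := \delta/\tau(r_0) \in [0,1] = F_b(\R)$, multiplying through by $\tau(r_0)$, and using Remark \ref{eq} to strengthen $\tau(q) \ge \delta$ to $\tau(q) = \delta$, I obtain
\[\min_{\substack{q \in \CP(M)\\ q \le r_0,\ \tau(q)=\delta}} \tau(aq) \;=\; \min_{\substack{q \in \CP(A)\\ q \le r_0,\ \tau(q)=\delta}} \tau(aq),\]
with both minima attained.

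It remains to check that $q_0$ realizes this common minimum. Every $q \in \CP(A')$ with $\tau(q) = \delta$ is of the form $1_E(a)$ for a Borel $E \subseteq [t_0,\infty)$ with $\mu_a(E) = \delta$, and then $\tau(aq) = \int_E \lambda\,d\mu_a(\lambda)$; atomlessness of $\mu_a$ makes $E = [t_0,t_1)$, i.e.\ $q = q_0$, the unique minimizer (this is exactly the content of Proposition \ref{pro} applied to $b$). The main obstacle I anticipate is the bookkeeping in the cut-down step: verifying that the functional calculus of $b$ in $N$ is the $[t_0,\infty)$-restriction of that of $a$ in $M$, and that the continuous-case hypothesis genuinely descends to $(N, b, A')$. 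Once those identifications are clean, the lemma is a direct appeal to Theorem \ref{kf} together with Proposition \ref{pro}.
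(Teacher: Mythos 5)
Your proof is correct and takes essentially the same route as the paper's: establish the $\CP(A)$ minimum via Proposition~\ref{pro}, and handle $\CP(M)$ by cutting down to the corner $(r_0 M r_0, \tau(\cdot)/\tau(r_0))$. The only differences are cosmetic --- you invoke Theorem~\ref{kf} together with Remark~\ref{eq} directly in that corner, whereas the paper re-runs the Krein--Milman extreme-point argument from the proof of Theorem~\ref{kf} there --- and your aside that $q_0$ is the \emph{unique} minimizer in $\CP(A)$ is an unneeded overreach (it can fail when the quantile function of $b$ is locally constant at $\delta/\tau(r_0)$), though only existence of a minimizer is actually used.
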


\begin{proof}
If we consider any other $q \in A,$ with $ q \le r_0 $ and $\tau(q) = \delta$, then $q$ is of the form $1_E(a)$, where $E \subset [t_0,\infty)$ with $\mu_a(E) = \delta$. Arguing as in the proof of Proposition \ref{pro},

\begin{align*}& \int_{t_0}^{ t_1} t ~ d\mu(t) \le \int_E t ~ d\mu(t) \\
\Rightarrow &\int_{F(t_0)}^{F(t_1)}X(s)~ds \le \int_{F(E)} X(s)~ds \\
\Rightarrow & \tau(aq_0) \le \tau(aq).
\end{align*}

To prove the same for any $q \le r_0$, first we note that since $r_0 \in W^*(\{a\})$,
$(M_0, \tau_0):=\Big(r_0M r_0, \frac{\tau(\cdot)}{\tau(r_0)}\Big)$ is also a von Neumann
algebra with equipped with a faithful normal tracial state and $a_0:=r_0ar_0$ is
a self adjoint element with a continuous distribution $\mu_0$ (with respect to $\tau_0$) in it. 

Let the von Neumann subalgebra generated by $a_0$ in $M_0$ be $A_0$.
Then $M_0$ and $A_0$ satisfy the same `continuity hypotheses as $M$ and $A$.

Any $q \le r_0$ with $\tau(q)= \delta$ can be thought of as $q \in \CP(M_0)$ with $\tau_0(q) = \frac{\delta}{\tau(r_0)}$,
and conversely.

Now as in the proof of the continuous case of Theorem \ref{kf} we can assume that
there exists a non-atomic probability space $(\Omega_0, \CB_0, P_0)$
and a map $\pi_0:A_0 \rar L^\infty(\Omega_0, \CB_0, P_0)$
such that $\int \pi_0(x) dP_0 = \tau_0(x) ~\forall x \in A_0$, $Y_0:=\pi_0(a_0)$
and $\pi_0$ is an isomorphism onto $L^\infty(\Omega_0, \sigma(Y_0), P_0)$.  \\

We proceed exactly as we did in the proof of the `continuous case' of Theorem \ref{kf} to show that
$\text{min}\{ \int Y_0Z_0 ~dP_0: Z_0 \in L^\infty(\Omega_0, \CB_0, P_0), 0 \le Z_0 \le 1, \int Z_0 dP_0 = \frac{\delta}{\tau(r_0)}\}$
is indeed attained and the minimizing contractions can only be of the form $1_E \in L^\infty(\Omega_0, \CB_0, P_0)$
for $E \in \sigma(Y_0), P_0(E) = \frac{\delta}{\tau(r_0)}$.
 
 Thus we have
\begin{align*}
\nonumber &\tau_0(a_0q_0) = \min_{\substack{q \in \CP(M_0) \\ \tau_0(q) = \delta_0}} ~ \tau_0(a_0q) \\
\nonumber \Rightarrow &\frac{\tau(a_0q_0)}{\tau(r_0)}=\min_{\substack{q \in \CP(M)\\ q \le r_0 \\ \frac{\tau(q)}{\tau(r_0)}= \frac{\delta}{\tau(r_0)}}} \frac{\tau(a_0q)}{\tau(r_0)} \\%
\nonumber \Rightarrow & \tau(aq_0)=\min_{\substack{q \in \CP(M)\\q \le r_0 \\ \tau(q) = \delta}} \tau(aq),
\text{ since $r_0$ commutes with $a$ and any $q\le r_0$.}
\end{align*}
\end{proof}

\begin{remark}\label{lemhal}
By considering $-a$ in place of $a$, for instance, we clearly have the following dual to Lemma \ref{hallem}:

With $M,a,A,t_0, t_1$ as in Lemma \ref{hallem}, let
$p:=1_{(-\infty, t_1)}(a)$ and $\tilde{q}:=1_{[t_0, t_1)}(a) \le p$. Then, $p, \tilde{q} \in \CP(A), \tau( p)= F(t_1), \tau(\tilde{q}) = \delta$, and
\[ \tau(a\tilde{q})=\max_{\substack{q \in \CP(A)\\q \le p \\ \tau(q) = \delta}} \tau(aq) = \max_{\substack{q \in \CP(M)\\q \le p\\ \tau(q) = \delta}} \tau(aq)\]
\end{remark}

We now proceed to our generalization of
the classical Courant Fischer-Weyl minmax theorem:

\begin{theorem} \label{cfthm}
Let $a$ be a self adjoint element
of a von Neumann algebra $M$ equipped with a faithful normal tracial state $\tau$.
Let $t_0$ and $t_1 \in \R$ such that $t_0 < t_1$ and $F_a(t_1)-F_a(t_0)=:\delta >0$. Then

\begin{align}\label{cf} 
\int_{F_a(t_0)}^{F_a(t_1)} X_a(s) ~ ds = \sup_{\substack{r \in \CP(M) \\ \tau(r) \ge 1-F_a(t_0)}} ~
\inf_{\substack{q \in \CP(M) \\ q \le r \\ \tau(q) = \delta}} ~ \tau(aq),
\end{align}

if either
\begin{enumerate}
 \item we are in the `continuous case'; or 
 \item  (`finite case') $M$ is a type $I_n$ factor for some $n \in \N$
   and $a$ has spectrum $\{\lambda_1 < \lambda_2 < \cdots < \lambda_n\}$.
 
 Moreover there exists $r_0 \in \CP(A) \le \CP(M)$ with $\tau(r_0) \ge 1-F(t_0)$ such that 
\[\int_{F_a(t_0)}^{F_a(t_1)} X_a(s) ~ ds =\min_{\substack{q \in \CP(M) \\ q \le r_0 \\ \tau(q) = \delta}} ~ \tau(aq),\]
so that the supremum is actually maximum. 
\end{enumerate}

\end{theorem}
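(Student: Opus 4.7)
The plan is to establish the two inequalities separately, using two complementary spectral projections of $a$. The natural candidate for the outer maximizer is $r_0 := 1_{[t_0,\infty)}(a) \in \CP(A)$, which has $\tau(r_0) = 1 - F_a(t_0)$.

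For the easy direction (the sup is $\geq$ the integral, together with the ``moreover'' clause), I apply Lemma \ref{hallem} directly to this $r_0$: with $q_0 := 1_{[t_0, t_1)}(a)$, the lemma gives
\[\min_{q \in \CP(M),\; q \leq r_0,\; \tau(q) = \delta} \tau(aq) \;=\; \tau(aq_0),\]
and the change-of-variables chain already used in Proposition \ref{pro} and Lemma \ref{hallem} identifies $\tau(aq_0) = \int_{t_0}^{t_1} t\,d\mu_a(t) = \int_{F_a(t_0)}^{F_a(t_1)} X_a(s)\,ds$. This single choice of $r_0 \in \CP(A)$ simultaneously witnesses that the supremum is at least the target integral and that it is attained by an element of $\CP(A)$.

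For the harder direction (the sup is $\leq$ the integral), the idea is to show that for every admissible $r \in \CP(M)$ there is a specific $q \leq r$ with $\tau(q) = \delta$ satisfying $\tau(aq) \leq \int_{F_a(t_0)}^{F_a(t_1)} X_a(s)\,ds$. Introduce the dual spectral projection $p := 1_{(-\infty, t_1)}(a)$, so that $\tau(p) = F_a(t_1)$ and hence $\tau(r) + \tau(p) \geq 1 + \delta$. The standard identity in a finite von Neumann algebra (via Kaplansky's formula) $\tau(r \wedge p) = \tau(r) + \tau(p) - \tau(r \vee p) \geq \tau(r) + \tau(p) - 1$ then yields $\tau(r \wedge p) \geq \delta$. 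Pick $q \in \CP(M)$ with $q \leq r \wedge p$ and $\tau(q) = \delta$; since $q \leq p$, applying Remark \ref{lemhal} (or, in the finite case, its analogue obtained by applying Corollary \ref{kfcor} to $-a$ inside the reduced algebra $pMp$) bounds
\[\tau(aq) \;\leq\; \max_{q' \in \CP(M),\; q' \leq p,\; \tau(q') = \delta} \tau(aq') \;=\; \tau(a\tilde{q}) \;=\; \int_{F_a(t_0)}^{F_a(t_1)} X_a(s)\,ds,\]
where $\tilde{q} = 1_{[t_0, t_1)}(a)$. Passing to the infimum over admissible $q \leq r$ completes the $\leq$ direction.

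The one step where the hypotheses really bite is the extraction of the intermediate projection $q \leq r \wedge p$ with $\tau(q) = \delta$ exactly. In the continuous case this is precisely what Definition \ref{ctscase} furnishes, applied with $B = M$ to the projection $r \wedge p$: since $\delta \leq \tau(r \wedge p)$, the set of traces of sub-projections of $r \wedge p$ fills $[0,\tau(r \wedge p)]$ and so contains $\delta$. In the finite case the same step is automatic, because $\delta = F_a(t_1) - F_a(t_0)$ is a multiple of $1/n$ and the rank of $r \wedge p$ is an integer at least $n\delta$, so a sub-projection of rank exactly $n\delta$ exists. Beyond this one extraction, every other step is bookkeeping around Lemma \ref{hallem}, Remark \ref{lemhal}, and the change-of-variables identity from Proposition \ref{pro}.
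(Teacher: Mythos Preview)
Your argument is correct and follows essentially the same route as the paper: the witness $r_0=1_{[t_0,\infty)}(a)$ together with Lemma~\ref{hallem} gives the lower bound and the ``moreover'' clause, while for an arbitrary $r$ you intersect with $p=r_1=1_{(-\infty,t_1)}(a)$, extract a sub-projection of trace $\delta$, and bound $\tau(aq)$ by the maximum over $\{q'\le p:\tau(q')=\delta\}$. The only cosmetic difference is that for this last maximum you invoke Remark~\ref{lemhal} directly, whereas the paper re-derives that identity by applying Theorem~\ref{kf} to $a_1=r_1ar_1$ inside $(r_1Mr_1,\tau/\tau(r_1))$ and then passing to complements; since Remark~\ref{lemhal} is already recorded before Theorem~\ref{cfthm}, your shortcut is perfectly legitimate and slightly cleaner.

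One small point to tighten: Lemma~\ref{hallem} and Remark~\ref{lemhal} are stated only under the continuous hypothesis of Definition~\ref{ctscase}. You handle this for the harder direction (``its analogue obtained by applying Corollary~\ref{kfcor} to $-a$ inside $pMp$''), but the easy direction also needs the finite-case analogue of Lemma~\ref{hallem}; the same fix works, namely apply the finite case of Theorem~\ref{kf} (or Corollary~\ref{kfcor}) to $r_0ar_0$ inside $r_0M_n(\C)r_0$, exactly as the paper does.
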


\begin{proof}
For simplicity we write $F$ and $X$ for $F_a$ and $X_a$ respectively. 
\begin{enumerate}
 \item (the continuous case) For proving ``$\le$'', deduce, from
 Lemma \ref{hallem} that
\be \label{cf} \int_{F(t_0)}^{F(t_1)} X(s) ~ ds  
\le \sup_{\substack{r \in \CP(M) \\ \tau(r) \ge 1-F(t_0)}} ~ \inf_{\substack{q \in \CP(M)\\ q \le r \\ \tau(q) = \delta}} ~ \tau(aq).
\ee

\bigskip

For ``$\ge$'', let us choose any projection $r$ with $\tau(r) \ge 1-F(t_0)$. 

Let $r_1=1_{(-\infty, t_1)}(a)$. Then $\tau(r_1)=F(t_1) \Rightarrow \tau(r_1 \wedge r) \ge F(t_1)-F(t_0)=\delta$. 

Hence, by the hypothesis in this continuous case, $\exists~q_1 \le r \wedge r_1$ with $\tau(q_1) = \delta$. 

Now consider the $II_1$ factor $(M_1, \tau_1) := \Big(r_1 M r_1, \frac{\tau(\cdot)}{\tau(r_1)}\Big)$,
where $\tau_1$ is a faithful normal tracial state on $M_1$. Then $q_1$ can be thought
of as a projection in $\CP(M_1)$ with $\tau_1(q_1) = \frac{\delta}{\tau(r_1)}$.

Note that $q_0=1_{[t_0,t_1)}(a) \le r_1$.

As above $a_1:=r_1ar_1$ is a self adjoint element with continuous distribution in $M_1$.
So we can consider our version of Ky Fan's theorem in $M_1$ (Theorem \ref{kf}) (also see Remark \ref{eq}):

\[\frac{\int_0^{F(t_0)} X(s) ds}{\tau(r_1)}=\tau_1(a(r_1-q_0))=\min_{\substack{q \in \CP(M_1) \\ \tau_{1}(q) = \frac{F(t_0)}{\tau(r_1)}}} \tau_1(aq).
\]

(using the fact that $a, q_0$ and $q\in \CP(M_1)$ commute with $r_1$.)

Subtracting both sides from $\tau_1(a_1)$ and writing  $q'$ for $r_1-q$ in the index, we can rewrite it as:

\[\frac{\int_{F(t_0)}^{F(t_1)} X(s)~ ds}{\tau(r_1)}=\max_{\substack{q' \in \CP(M_1) \\ \tau_1(q')= \frac{F(t_1)-F(t_0)}{\tau(r_1)} = \frac{\delta}{\tau(r_1)}}} \tau_1(aq'),\]

or equivalently,

\[\int_{F(t_0)}^{F(t_1)} X(s)~ ds=\max_{\substack{q' \in \CP(M)\\ q' \le r_1 \\ \tau(q') = \delta}} \tau(aq').\]

Now using the fact that $q_1 \le r \wedge r_1$,
we have:

\[ \int_{F(t_0)}^{F(t_1)} X(s)~ ds=\max_{\substack{q' \in \CP(M)\\ q' \le r_1 \\ \tau(q') = \delta}} \tau(aq) \ge \tau(aq_1)
\ge \inf_{\substack{q \in \CP(M)\\ q \le r \\ \tau(q) = \delta}} \tau(aq),\]

thus, and using the fact that our choice of $r$ was arbitrary with $\tau(r) \ge 1-F(t_0)$, we have:

\begin{align}\label{**}
 \int_{F(t_0)}^{F(t_1)} X(s)~ ds \ge \sup_{\substack{r \in \CP(M) \\ \tau(r) \ge 1-F(t_0)}} ~ \inf_{\substack{q \in \CP(M)\\ q \le r \\ \tau(q) = \delta}} ~ \tau(aq). 
\end{align}

Equations \ref{cf} and \ref{**} together give us the required equality.

\bigskip

\item (the finite case) Notice that if we set $t_0=\lambda_i, t_1=\lambda_{i+j}, \delta=\frac{j}{n}$,
 where $i,j\in \{1,\cdots, n\}$ such that $i+j-1 \le n$, equation \ref{cf} translates to:

\[\lambda_i+\lambda_{i+1} + \cdots + \lambda_{i+j-1} = \sup_{\substack{r \in \CP(M_n(\C)) \\ Tr(r)\ge n-i+1}}~
\inf_{\substack{q \in \CP(M_n(\C)) \\ q \le r\\ Tr(q)= j}} Tr(aq),\]

where $Tr$ is the sum of the diagonal entries of matrices. 

For the inequality ``$\le$'' we prove,

\[\lambda_i + \lambda_{i+1} + \cdots + \lambda_{i+j-1}=Tr(aq_0)=\min_{\substack{q \in \CP(M_n(\C)) \\q \le r_0\\ Tr(q)= j} } Tr(aq),\]

where $r_0= 1_{\{\lambda_i,\lambda_{i+1},\cdots,\lambda_{n}\}}(a)$ and $q_0=1_{\{\lambda_i, \lambda_{i+1},\cdots,\lambda_{i+j-1}\}}(a)$,

by first showing that any minimizing projection below $r_0$ has to commute with $r_0ar_0$,
and then using the fact that with distinct eigenvalues $r_0ar_0$ generates a masa in $r_0M_n(\C)r_0$, concluding that minimizing projections
have to be spectral projections (see the exactly similar proof of the finite case of Theorem \ref{kf}).


\bigskip

For proving ``$\ge$'', we start with an arbitrary projection $r$ with $Tr(r)\ge n-i+1$ and note that if we define $r_1:=1_{\{\lambda_1,\cdots,\lambda_{i+j-1}\}}(a)$,
then $\exists~ q_1 \le r \wedge r_1$ such that $Tr(q_1) = j$. Now we proceed
using Ky Fan's theorem for finite dimensional Hermitian matrix $r_1ar_1$ in $r_1 M_n(\C) r_1$,
exactly as in the above proof of the continuous case of this theorem.
\end{enumerate}
\end{proof}

\begin{remark}
Theorem \ref{cfthm} can equivalently be stated as:
 
\[\int_{F(t_0)}^{F(t_1)} X(s) ~ ds = \inf_{\substack{p \in \CP(M) \\ \tau(p) \ge F(t_1)}} ~
\sup_{\substack{q \le p \\ \tau(q) = \delta}} ~ \tau(aq),\]

Moreover we can get the classical Courant-Fischer-Weyl minmax theorem for Hermitian matrices in full generality (i.e. involving non-distinct eigenvalues as well)
from the above theorem in exactly similar manner as in Corollary \ref{kfcor}.
\end{remark}

The classical Courant-Fischer-Weyl minmax theorem has a natural corollary
that says if $a,b$ are Hermitian matrices in $M_n(\C)$ such that
$a \le b$ (i.e. $b-a$ is positive semi-definite),
and if $\{\alpha_1 \le \cdots \le \alpha_n\}$ and $\{\beta_1 \le \cdots \le \beta_n\}$
are their spectra respectively, then $\alpha_j \le \beta_j$ for all $j \in \{1,\cdots,n\}$.
As expected, Theorem \ref{cfthm} leads us to the same corollary for the `continuous case':
 
\begin{corollary}\label{cfcor}
 Let $M$ be a $II_1$ factor equipped with faithful normal tracial state $\tau$. If $a,b \in M$ such that $a=a^*,b=b^*$
and $\mu_a,\mu_b$ have no atoms. Then
\begin{align}\label{leq}
a \le b \Rightarrow  X_a \le X_b.
\end{align}
\end{corollary}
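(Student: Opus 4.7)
The plan is to combine the elementary monotonicity observation that $a\leq b\Rightarrow \tau(aq)\leq \tau(bq)$ for every projection $q$ with the max-min formula of Theorem \ref{cfthm}, applied to $a$ and $b$ simultaneously. The key point is that the constraint sets on the right-hand side of \ref{cf} depend only on $s$ and $\delta$, not on the operator, so the inequality of the integrands transfers cleanly to an inequality of the max-mins, and hence of the integrals $\int_s^{s+\delta}X_a$ and $\int_s^{s+\delta}X_b$.

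First I would record the trivial fact: since $b-a\geq 0$ and $\tau$ is positive, for any $q\in\CP(M)$,
\[
\tau((b-a)q) \;=\; \tau\bigl(q(b-a)q\bigr) \;\geq\; 0,
\]
so $\tau(aq)\leq \tau(bq)$. Next, I would note that under the hypotheses --- $M$ a $II_1$ factor with $\mu_a,\mu_b$ non-atomic --- both $a$ and $b$ fall under the continuous case of Definition \ref{ctscase}: the $II_1$ factor provides projections of every trace value, and non-atomicity handles the abelian subalgebras generated by $a$ and $b$. In particular $F_a$ and $F_b$ are continuous surjections $\R\to[0,1]$.

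Fix $s\in[0,1)$ and $\delta>0$ with $s+\delta\leq 1$. By surjectivity of $F_a$ and $F_b$ pick $t_0^a<t_1^a$ with $F_a(t_0^a)=s$, $F_a(t_1^a)=s+\delta$, and similarly $t_0^b<t_1^b$. Applying Theorem \ref{cfthm} to $a$ and to $b$ yields
\[
\int_{s}^{s+\delta} X_a(u)\,du \;=\; \sup_{\substack{r\in\CP(M)\\ \tau(r)\geq 1-s}} \ \inf_{\substack{q\in\CP(M)\\ q\leq r,\ \tau(q)=\delta}} \tau(aq),
\]
and the analogous identity for $b$. Since the feasible sets for $r$ and for $q$ depend only on $s$ and $\delta$, the pointwise inequality $\tau(aq)\leq\tau(bq)$ passes through the inner $\inf$ and then the outer $\sup$, giving
\[
\int_{s}^{s+\delta} X_a(u)\,du \;\leq\; \int_{s}^{s+\delta} X_b(u)\,du
\]
for every admissible $s,\delta$.

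The last step is to upgrade this integral inequality to the pointwise bound $X_a\leq X_b$. This is the only subtle point: $X_a$ and $X_b$ may well have jumps (at values of $s$ lying over gaps in the support of the respective measures), so one cannot simply invoke continuity. However Proposition \ref{linv} guarantees that both $X_a$ and $X_b$ are \emph{right-continuous}, hence for every $s\in[0,1)$,
\[
X_a(s) \;=\; \lim_{\delta\to 0^+} \frac{1}{\delta}\int_{s}^{s+\delta} X_a(u)\,du,
\]
and likewise for $X_b$. Comparing limits yields $X_a(s)\leq X_b(s)$ for every $s$, as desired. The main potential obstacle I anticipate is precisely this final passage; it is disposed of by the right-continuity built into the quantile construction, rather than by any further analytic machinery.
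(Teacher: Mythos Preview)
Your argument is correct and follows the paper's strategy almost verbatim: pass $a\le b$ through the max--min formula of Theorem~\ref{cfthm}, whose constraint sets depend only on $(s,\delta)$, to obtain $\int_{s}^{s+\delta}X_a\,dm\le\int_{s}^{s+\delta}X_b\,dm$ for all admissible $s,\delta$.

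The only point of departure is the upgrade from the integral inequality to the pointwise one. The paper first extends the inequality from intervals to the algebra of finite disjoint unions, then to all Borel sets by approximation in measure, concludes $X_a\le X_b$ $m$-a.e., and finally invokes continuity of $X_a,X_b$. Your route---dividing by $\delta$ and letting $\delta\to 0^+$, using the right-continuity of the quantile functions guaranteed by Proposition~\ref{linv}---is shorter and, in fact, more robust: non-atomicity of $\mu_a$ forces $F_a$ to be continuous but does \emph{not} force $X_a$ to be continuous (gaps in the support of $\mu_a$ produce jumps of $X_a$), so the paper's last line leans on a hypothesis that is not actually available, whereas right-continuity is exactly what the construction provides.
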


\begin{proof}
Notice that since $a \le b$ and $\tau$ is positivity preserving, we have
\begin{align}\label{tau}
 \tau(xax^*) \le \tau(xbx^*).
\end{align}
for all $x \in M$.

Fix $0 \leq s_0 < s_1 < 1$.

By our assumptions on $a$ and $b$,
$\mu_a, \mu_b$ are compactly supported probability measures with no atoms.
Hence $F_a$ and $F_b$ are continuous functions with $range(F_a)=range(F_b)= [0,1]$.
Thus $\exists~ t_0^a,t_1^a, t_0^b, t_1^b \in \R$ such that $s_0=F_a(t_0^a)=F_b(t_0^b)$ and $s_1=F_a(t_1^a)=F_b(t_1^b)$.

Now using Theorem \ref{cfthm}
\begin{align*}
 \int_{s_0}^{s_1} X_a~dm &= \sup_{\substack{r \in \CP(M) \\ \tau(r) \ge 1-F_a(t_0^a)}} \inf_{\substack{q \in \CP(M) \\ q \le r \\ \tau(r)=s_1-s_0}} \tau (aq) \\
                         & = \sup_{\substack{r \in \CP(M) \\ \tau(r) \ge 1-F_a(t_0^a)}} \inf_{\substack{q \in \CP(M) \\ q \le r \\ \tau(r)=s_1-s_0}} \tau (qaq) \\
                         & \le \sup_{\substack{r \in \CP(M) \\ \tau(r)
                             \ge 1-F_b(t_0^b)}} \inf_{\substack{q \in
                             \CP(M) \\ q \le r \\ \tau(r)=s_1-s_0}}
                         \tau (qbq), ~\text{by the inequality \ref{tau}}\\
                         &= \sup_{\substack{r \in \CP(M) \\ \tau(r) \ge 1-F_b(t_0^b)}} \inf_{\substack{q \in \CP(M) \\ q \le r \\ \tau(r)=s_1-s_0}} \tau (bq) \\
                         &= \int_{s_0}^{s_1} X_b~dm.
\end{align*}

This proves that 
\begin{align}\label{int}
 \int_I X_a ~dm \le \int_I X_b ~dm
\end{align}
for any interval $I=[s_0,s_1) \subset [0,1)$, and in fact for any $I
\in \CA:=\{\sqcup_{j=1}^k [s_0^j,s_1^j): 0 \le s_0^j < s_1^j <1, k \in \N\}$. 

But $\CA$ is an algebra of sets which generates the $\sigma$-algebra
$\CB_{[0,1)}$. 
Thus for any Borel $E \subset [0,1)$, there exists a sequence $\{I_n:n
\in \N\} \subset \CA$ such that $\mu(I_n \Delta E) \rightarrow 0$.

Recall from Proposition \ref{linv} that our quantile functions of self
adjoint elements of von Neumann algebras  
are elements of $L^{\infty} ([0,1),\CB_{[0,1)}, m)$. We may hence
deduce from the sentence following equation. (\ref{int})that if $E, I_n$
are the previous paragraph, we have:
\begin{align*}
\int_E X_a dm &= \lim_{n \rightarrow \infty} \int_{I_n} X_a dm\\
&\le \lim_{n \rightarrow \infty} \int_{I_n} X_a dm\\
&= \int_E X_b dm.
\end{align*}
As $E \in \CB_{[0,1)}$ was arbitrary, this shows that, $X_a \le
X_B m-a.e.$; as $X_a, X_b$ are continuous by our hypotheses, this
shows that indeed $X_a \le X_b$.
\end{proof}

\bigskip
Finally, we discuss a continuous analogue of Lidskii's majorization result.

By Theorem \ref{kf}, we have the following lemma:

\begin{lemma}\label{lid1}
If $M$ is a von Neumann algebra with a faithful normal tracial state $\tau$ on it,
then for $a=a^*, b=b^* \in M$ with $\mu_a, \mu_b$ non-atomic and for all $s \in [0,1)$, 
\[\int_0^{s} X_{a+b} ~dm  \ge \int_0^{s} (X_{a} +X_{b}) ~dm.\]
Moreover,
\[\int_0^{1} X_{a+b} ~dm  = \int_0^{1}( X_{a}+ X_{b}) ~dm.\]
\end{lemma}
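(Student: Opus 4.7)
The plan is to apply Theorem \ref{kf} to $a$ and $b$ separately---valid since $\mu_a,\mu_b$ are non-atomic, so $F_a(\R)=F_b(\R)=[0,1]$---and then to exhibit a single projection realizing $\int_0^s X_{a+b}\,dm$ as $\tau((a+b)p_0)$. The equality at $s=1$ is immediate: the change-of-variables underlying Proposition \ref{pro} gives $\int_0^1 X_c\,dm = \int_\R t\,d\mu_c(t) = \tau(c)$ for any self-adjoint $c$, so $\int_0^1 X_{a+b}\,dm = \tau(a)+\tau(b) = \int_0^1(X_a+X_b)\,dm$ by linearity of $\tau$.

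For the inequality with $s \in [0,1)$, Theorem \ref{kf} yields
\[
\int_0^s X_a\,dm = \min_{p \in \CP(M),\,\tau(p)\ge s}\tau(ap),\qquad \int_0^s X_b\,dm = \min_{p \in \CP(M),\,\tau(p)\ge s}\tau(bp),
\]
so for every $p\in\CP(M)$ with $\tau(p)\ge s$,
\[
\tau((a+b)p)=\tau(ap)+\tau(bp)\ge \int_0^s X_a\,dm + \int_0^s X_b\,dm,
\]
and the infimum over such $p$ is likewise bounded below by $\int_0^s(X_a+X_b)\,dm$. It remains to produce $p_0\in\CP(M)$ with $\tau(p_0)=s$ and $\tau((a+b)p_0)=\int_0^s X_{a+b}\,dm$. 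Set $t_0 = X_{a+b}(s)$, $p_1 = 1_{(-\infty,t_0)}(a+b)$, $q_1=1_{\{t_0\}}(a+b)$; the defining relation for $X_{a+b}$ together with left-continuity of $F_{a+b}$ forces $\tau(p_1)\le s \le \tau(p_1)+\tau(q_1)$. Invoking the continuous-case assumption on $M$ (Definition \ref{ctscase}), pick $q_0\le q_1$ with $\tau(q_0)=s-\tau(p_1)$ and set $p_0 = p_1+q_0$. Since $(a+b)q_0 = t_0q_0$ and $X_{a+b}\equiv t_0$ on $[\tau(p_1),\tau(p_1)+\tau(q_1))$, a direct computation gives $\tau((a+b)p_0)=\int_0^{\tau(p_1)}X_{a+b}\,dm + t_0(s-\tau(p_1))=\int_0^s X_{a+b}\,dm$, completing the inequality.

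The main obstacle is that the hypotheses do not guarantee $\mu_{a+b}$ is non-atomic---indeed $\mu_a$ and $\mu_{-a}$ can both be non-atomic while $\mu_{a+(-a)}=\delta_0$---so Theorem \ref{kf} cannot be applied directly to $a+b$, and the minimizing projection $p_0$ must be produced by hand, splitting any atom of $\mu_{a+b}$. This silently requires that $M$ itself satisfies the continuous-case condition of Definition \ref{ctscase} (e.g., $M$ a $II_1$ factor), an assumption implicit in the ambient setting of Section 3 but not explicitly recorded in the lemma's statement.
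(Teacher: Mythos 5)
Your proof follows the same overall strategy as the paper's: exhibit a projection $p_0$ with $\tau(p_0)\ge s$ for which $\int_0^s X_{a+b}\,dm = \tau((a+b)p_0)$, split $\tau((a+b)p_0)=\tau(ap_0)+\tau(bp_0)$, and bound each term below via Theorem \ref{kf} applied to $a$ and $b$ separately; and your treatment of the $s=1$ equality via change of variables is exactly the paper's. What you have added---and correctly so---is the observation that the paper's claimed existence of $p_0$ (which it extracts from ``the proof of Theorem \ref{kf}'' applied to $a+b$) is not justified by the stated hypotheses: $\mu_{a+b}$ may have atoms even when $\mu_a,\mu_b$ do not (your example $b=-a$ is exactly right), so Theorem \ref{kf} does not apply to $a+b$ and $s$ need not lie in $F_{a+b}(\R)$, whence the minimizing spectral projection may fail to exist. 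Your explicit construction of $p_0$---taking $p_1=1_{(-\infty,t_0)}(a+b)$ with $t_0=X_{a+b}(s)$, noting $F_{a+b}(t_0)\le s\le F_{a+b}(t_0^+)$, and adjoining a subprojection $q_0\le 1_{\{t_0\}}(a+b)$ of trace $s-\tau(p_1)$---is a valid repair, and the verification that $\tau((a+b)p_0)=\int_0^s X_{a+b}\,dm$ (using $(a+b)q_0=t_0q_0$ and the constancy of $X_{a+b}$ on $[F_{a+b}(t_0),F_{a+b}(t_0^+))$) is correct. As you point out, this repair requires $M$ itself to satisfy the continuous-case condition of Definition \ref{ctscase} (e.g.\ $M$ a $II_1$ factor), a hypothesis not explicitly recorded in the lemma's statement; so either that hypothesis should be added, or one should additionally assume $\mu_{a+b}$ non-atomic.
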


\begin{proof}
Recall from our proof of Theorem \ref{kf} that there exists a
projection $p \in \CP(M)$ (in fact in the von Neumann algebra
generated by $a+b$) such that $\tau(p) \ge s$ and
\begin{align*}
\int_0^{s} X_{a+b} ~dm &= \tau((a+b)p) \\
&=\tau(ap)+\tau(bp) \\
&\ge \inf\{\tau(ap'): p' \in \CP(M), \tau(p') \ge s\}+\inf\{\tau(bp'): p' \in \CP(M), \tau(p') \ge s\} \\
&=\int_0^{s} X_{a} ~dm+\int_0^{s} X_{b} ~dm \\
&=\int_0^s (X_a +X_b)~dm.
\end{align*}

Finally, it is clear (from our change-of-variable argument in
Proposition \ref{pro} for instance) that for any $c=c^* \in M$, we
have $\int_0^1 X_c dm = \tau(c)$ and hence
\[\int_0^1 X_{a+b}~dm = \tau(a+b)=\tau(a)+\tau(b)=\int_0^1 X_a~dm+\int_0^1 X_b~dm=\int_0^1 (X_a +X_b)~dm.\]
\end{proof}

The above is an analogue of the fact that for $n \times n$ Hermitian matrices $a, b$,
with their eigenvalues $\lambda_1 (a) \le \cdots \le \lambda_n (a)$ and
$\lambda_1 (b) \le \cdots \le \lambda_n (b)$, for all $k \in \{1,\cdots,n-1\}$,
\[ \sum_{j=1}^k \lambda_j(a+b) \ge \sum_{j=1}^k \lambda_j(a)+\sum_{j=1}^k \lambda_j(b),\]
and 
\[ \sum_{j=1}^n \lambda_j(a+b) = \sum_{j=1}^n \lambda_j(a)+\sum_{j=1}^n \lambda_j(b),\]
i.e. $\lambda(a)+\lambda(b)$ is majorized by $\lambda(a+b)$ in the sense
of \cite{HLP}.

We consider the definition of majorization in the continuous context (see for example, \cite{sak}) as follows:

\begin{definition}
 For $a=a^*, b=b^*$ in a von Neumann algebra $M$ with a faithful normal tracial state $\tau$
 on it, $a$ is said to be \textit{majorized} by $b$ if $\int_0^s X_a ~dm \le \int_0^s X_b~ dm$
 for all $s \in [0,1)$ and $\int_0^1 X_a ~dm = \int_0^1 X_b~ dm$. When
 this happens, we simply write $X_a \prec X_b$.
\end{definition}

Then, Lemma \ref{lid1} can be written as:

\[X_{a+b} \prec X_a + X_b,\]
which gives a version of the continuous analogue of Lidskii's theorem.

\bigskip

 The study of majorization
and its von Neumann algebraic analogue is vast (see for example,
\cite{kamei1}, \cite{hiai}) and
closely related to the minmax-type results but
we will not discuss it further in this paper.

\section{Continuous version of Wielandt's minmax principle}

In this section we state and prove a continuous analogue of Wielandt's minmax theorem. The classical matrix formulation
of Wielandt's theorem is obtained by taking $\delta = \frac{1}{n}$ when $M = M_n(\C)$,
but we shall not repeat the kind of reasoning given in the case of the Courant-Fischer-Weyl theorem
in the finite-dimensional case where our assumptions of our `continuous case'  are not valid.
We shall be content with formulating and proving the continuous case.

We make the standing `continuity assumption' of Definition \ref{ctscase} throughout this section. Thus our results are
valid for any von Neumann algebra that admits a faithful normal tracial state and has the above-mentioned property.

Our version of Wielandt's theorem is as follows:

\begin{theorem}\label{wiel} 
Let $F, X$ be the distribution and quantile function of $a$.
Let $\delta_j \in \R_+$ and $t_0^j, t_1^j$, $j=1,\cdots,k$, be points in the spectrum of $a$ such that
$t_0^1 < t_1^1 \le t_0^2 < t_1^2 \le \cdots \le t_0^{k-1} < t_1^{k-1} \le t_0^k$ and $F(t_1^j)-F(t_0^j)=\delta_j$, for all $j$. Then

\[\sum_{j=1}^k \int_{[F(t_0^j),F(t_1^j))} X(s) ~ ds =
\inf_{\substack{p_j \in \CP(M) \\ p_1 \le \cdots \le p_k \\ \tau(p_j) \ge F(t_1^j)}} ~
\sup_{\substack{\hat{q}_j \in \CP(M) \\ \hat{q}_j \le p_j \\ \tau(\hat{q}_j) = \delta_j \\ \hat{q}_j \bot \hat{q}_i \text{ for } j \neq i}} ~ \sum_{j=1}^k\tau(a\hat{q}_j).\]
 
 Moreover,  $\exists p_1 \le \cdots \le p_k$ with $p_j \in \CP(A) \subset \CP(M)$, for which there exist mutually orthogonal projections
 $\hat{q}_j \le p_j,\tau(\hat{q}_j)=\delta_j~, \forall j$ such that  
 \[\sum_{j=1}^k \int_{[F(t_0^j),F(t_1^j))} X(s) ~ ds=
 \max_{\substack{\hat{q}_j \le p_j \\ \tau(\hat{q}_j) = \delta_j \\ \hat{q}_j \bot \hat{q}_i}} ~ \sum_{j=1}^k\tau(a\hat{q}_j);\]

\end{theorem}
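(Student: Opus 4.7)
My plan mirrors the structure of the proof of Theorem \ref{cfthm}, now with $k$ pairwise-disjoint intervals in place of one. First, I identify the natural candidate: set $p_j := 1_{(-\infty, t_1^j)}(a) \in \CP(A)$ and $\hat{q}_j := 1_{[t_0^j, t_1^j)}(a) \in \CP(A)$. The ordering hypothesis $t_1^j \le t_0^{j+1}$ makes the intervals $[t_0^j, t_1^j)$ pairwise disjoint, so the $\hat{q}_j$ are mutually orthogonal spectral projections with $\hat{q}_j \le p_j$, $\tau(p_j) = F(t_1^j)$, $\tau(\hat{q}_j) = \delta_j$, and $p_1 \le \cdots \le p_k$. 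By the change-of-variable argument used in Proposition \ref{pro},
\[
\tau(a\hat{q}_j) \;=\; \int_{t_0^j}^{t_1^j} \lambda\, d\mu_a(\lambda) \;=\; \int_{F(t_0^j)}^{F(t_1^j)} X(s)\, ds,
\]
so $\sum_j \tau(a\hat{q}_j)$ already realises the required value.

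To establish the ``$\le$'' direction (and the `moreover' claim) I pass to the abelian subalgebra $A$ via the $\tau$-preserving conditional expectation $\CE_A: M \to A$. For any admissible mutually orthogonal $\hat{q}_j \le p_j$ with $\tau(\hat{q}_j) = \delta_j$, the elements $\tilde{q}_j := \CE_A(\hat{q}_j)$ lie in $A$ with $0 \le \tilde{q}_j \le p_j$ (as $p_j \in A$), $\sum_j \tilde{q}_j \le 1$ (as $\sum_j \hat{q}_j$ is a projection), $\int \tilde{q}_j\, d\mu_a = \delta_j$, and $\tau(a\hat{q}_j) = \tau(a\tilde{q}_j)$ since $a \in A$. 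Via the quantile isomorphism $A \cong L^\infty([0,1), m)$ identifying $a$ with $X$ and $p_j$ with $1_{[0, F(t_1^j))}$, the task reduces to the scalar maximisation: maximise $\sum_j \int X \psi_j\, dm$ over $0 \le \psi_j \le 1_{[0, F(t_1^j))}$ with $\sum_j \psi_j \le 1$ and $\int \psi_j\, dm = \delta_j$. Setting $\phi := \sum_j \psi_j$, the support constraints together with the individual mass conditions imply $\int_0^{F(t_1^l)} \phi\, dm \ge \sum_{j \le l} \delta_j$ for every $l$, while $\int_0^1 \phi\, dm = \sum_j \delta_j$ and $0 \le \phi \le 1$. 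Processing the bands $[F(t_1^{l-1}), F(t_1^l))$ from the top down and pushing $\phi$ as high as possible on each band (using monotonicity of $X$), one sees the unique maximiser is $\phi^\ast = \sum_j 1_{[F(t_0^j), F(t_1^j))}$, giving exactly $\sum_j \int_{F(t_0^j)}^{F(t_1^j)} X\, dm$.

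For the ``$\ge$'' direction, given an arbitrary admissible chain $p_1 \le \cdots \le p_k$ with $\tau(p_j) \ge F(t_1^j)$, one must exhibit mutually orthogonal $\hat{q}_j \le p_j$ with $\tau(\hat{q}_j) = \delta_j$ and $\sum_j \tau(a\hat{q}_j) \ge \sum_j \int_{F(t_0^j)}^{F(t_1^j)} X$. Since $\sum_j \tau(a\hat{q}_j) = \tau(a\hat{Q}_k)$ with $\hat{Q}_k := \sum_j \hat{q}_j \le p_k$ of trace $\sum_j \delta_j$, an equivalent task is to find $\hat{Q}_k \le p_k$ of the right trace that (i) satisfies $\tau(a\hat{Q}_k) \ge \sum_j \int_{F(t_0^j)}^{F(t_1^j)} X$, and (ii) admits a chain decomposition $\hat{Q}_1 \le \cdots \le \hat{Q}_k$ with $\hat{Q}_j \le p_j$ and $\tau(\hat{Q}_j) = \sum_{i \le j} \delta_i$. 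Condition (i) alone follows from Theorem \ref{cfthm} applied to $p_k$: $\sup_{q \le p_k,\ \tau(q) = \sum_j \delta_j} \tau(aq) \ge \int_{F(t_1^k) - \sum_j\delta_j}^{F(t_1^k)} X \ge \sum_j \int_{F(t_0^j)}^{F(t_1^j)} X$ by a standard rearrangement using monotonicity of $X$. The \emph{main obstacle} is the simultaneous enforcement of (ii): a projection maximising (i) need not admit such a chain decomposition, and the naive greedy estimate $\tau(p_j \wedge \hat{Q}_k) \ge \tau(p_j) + \tau(\hat{Q}_k) - 1$ can fall short of $\sum_{i \le j} \delta_i$. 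I expect the resolution to require a Hall marriage-type argument for projections in the continuous case, selecting $\hat{Q}_k$ so as simultaneously to nearly maximise $\tau(a\hat{Q}_k)$ and have trace-wise sufficient intersections $\tau(p_j \wedge \hat{Q}_k)$ for each $j$, plausibly via an extremal-selection argument in the spirit of the Krein--Milman step in the proof of Theorem \ref{kf}.
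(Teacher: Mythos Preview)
Your treatment of the upper bound---showing that the infimum is attained at the spectral chain $p_j = 1_{(-\infty,t_1^j)}(a)$ with inner supremum equal to $\sum_j \int_{F(t_0^j)}^{F(t_1^j)} X$---is correct and in fact somewhat more direct than the paper's: you reduce via $\CE_A$ to a scalar optimisation and solve it by a band-by-band greedy argument, whereas the paper invokes the dual of Lemma~\ref{hallem} (Remark~\ref{lemhal}) separately for each $j$ and then observes that imposing the orthogonality constraint can only decrease the sup. Both routes are fine, and both yield the ``moreover'' clause.

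The genuine gap is exactly where you flag it: the lower bound, i.e.\ for an arbitrary chain $p_1 \le \cdots \le p_k$ exhibiting admissible $\hat{q}_j$ with $\sum_j \tau(a\hat{q}_j) \ge \sum_j \int_{F(t_0^j)}^{F(t_1^j)} X$. Your reformulation in terms of a single $\hat{Q}_k \le p_k$ satisfying (i) a trace inequality and (ii) a chain-decomposition condition is not how the paper proceeds, and trying to optimise (i) subject to (ii) directly is indeed awkward. The paper's key idea is to introduce the \emph{dual spectral chain} $r_j := 1_{[t_0^j,\infty)}(a)$, so that $r_1 \ge \cdots \ge r_k$ with $\tau(r_j) = 1 - F(t_0^j)$, and then prove a purely combinatorial matching lemma (Lemma~\ref{lemma2}, built from an inductive projection-selection argument in Lemmas~\ref{l4}--\ref{lemma} together with the elementary estimate $\tau(r \wedge e^\perp) \ge \tau(r) - \tau(e)$ of Lemma~\ref{l1}): given the two chains $p_1 \le \cdots \le p_k$ and $r_1 \ge \cdots \ge r_k$ with the stated trace bounds, there exist mutually orthogonal $q_j \le r_j$ and mutually orthogonal $\tilde{q}_j \le p_j$ with $\tau(q_j) = \tau(\tilde{q}_j) = \delta_j$ and, crucially, $\sum_j q_j = \sum_j \tilde{q}_j$. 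Once you have this, the argument closes immediately:
\[
\sum_j \tau(a\tilde{q}_j) \;=\; \tau\Big(a\sum_j \tilde{q}_j\Big) \;=\; \tau\Big(a\sum_j q_j\Big) \;=\; \sum_j \tau(aq_j),
\]
and each $\tau(aq_j) \ge \int_{F(t_0^j)}^{F(t_1^j)} X$ by Lemma~\ref{hallem} applied in the corner $r_j M r_j$. So your instinct that a Hall--marriage-type step is needed is exactly right; what you are missing is the specific device of pairing the given $p$-chain against the spectral $r$-chain and matching them so that the two orthogonal families have the \emph{same sum}, thereby transferring the computation from the $p$-side (where you have no control over $a$) to the $r$-side (where Lemma~\ref{hallem} gives the lower bound termwise).
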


The following lemmas lead to the proof of the theorem above:

\begin{lemma}\label{l4}
Let $(M,\tau)$ be as above. 
Consider, for any $k \ge 2$, 
\begin{align*} &\{r_1,r_2,\cdots r_k; q'_1, \cdots, q'_{k-1}\} \subset \CP(M),\\ 
&r_1 \geq \cdots \geq r_k,\\
&\tau(r_j) \geq \delta_k + \cdots + \delta_j ~\forall ~1 \leq j \leq k,\\
&q'_j \leq r_j ~\forall 1 \leq j \le k-1,\\
&q'_s q'_t = 0 ~\forall 1 \leq s < t \leq k-1,\\
&\tau(q'_j) =  \delta_j ~\forall 1 \leq j < k-1. 
\end{align*} 
Then there exist mutually orthogonal projections $q_j \leq r_j ~\forall 1 \leq j \leq k$ in $M$,
such that $\sum_{j=1}^k q_j \geq  \sum_{j=1}^{k-1} q'_j$,
and $\tau(q_j) = \delta_j ~\forall 1 \leq j \leq k$.
\end{lemma}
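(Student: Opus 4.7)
The proof plan is induction on $k$, with the substantive work in the $k=2$ base case.

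\textbf{Base case ($k=2$).} Given $r_1 \geq r_2$ with $\tau(r_1) \geq \delta_1 + \delta_2$, $\tau(r_2) \geq \delta_2$, and a projection $q'_1 \leq r_1$ of trace $\delta_1$, the plan is to construct a projection $Q^* \geq q'_1$ of trace $\delta_1 + \delta_2$ satisfying $\tau(Q^* \wedge r_2) \geq \delta_2$, then take $q_2$ to be any sub-projection of $Q^* \wedge r_2$ of trace $\delta_2$ (available by the continuous-case hypothesis, Definition~\ref{ctscase}) and set $q_1 := Q^* - q_2$. The orthogonality $q_1 \perp q_2$, the trace conditions, and the domination $q_1 + q_2 = Q^* \geq q'_1$ are then immediate. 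One writes $Q^* = q'_1 + e$ where $e$ is a sub-projection of $1 - q'_1$ of trace $\delta_2$, chosen by first packing into $r_2 \wedge (1 - q'_1)$ maximally and then extending into $(r_1 - r_2) \wedge (1 - q'_1)$ when necessary, invoking the continuous-case hypothesis to extract sub-projections of any prescribed trace.

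\textbf{Inductive step.} Assuming the result for $k-1$, apply the base case with $\tilde r_1 := r_1$, $\tilde r_2 := r_k$, $\tilde \delta_1 := \sum_{j<k} \delta_j$, $\tilde \delta_2 := \delta_k$, and $\tilde q'_1 := Q := \sum_{j<k} q'_j$. This produces $q_k := \tilde q_2 \leq r_k$ of trace $\delta_k$ and a projection $\tilde q_1 \leq r_1$ of trace $\sum_{j<k}\delta_j$, orthogonal to $q_k$ and satisfying $\tilde q_1 + q_k \geq Q$. One then decomposes $\tilde q_1 = q_1 + \cdots + q_{k-1}$ with $q_j \leq r_j$, $\tau(q_j) = \delta_j$, disjoint, and $\sum_{j<k} q_j \geq \sum_{j \leq k-2} q'_j$ by applying the inductive hypothesis to appropriately restricted data (the chain $r_1 \geq \cdots \geq r_{k-1}$ together with the inputs $q'_1, \ldots, q'_{k-2}$, carried out inside the compressed algebra $\tilde q_1 M \tilde q_1$).

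\textbf{Main obstacle.} The delicate point is the base case, specifically verifying $\tau(Q^* \wedge r_2) \geq \delta_2$ in the non-commutative regime where $q'_1$ and $r_2$ need not commute. The naive Kaplansky bound $\tau(Q^* \wedge r_2) \geq (\delta_1 + \delta_2) + \tau(r_2) - \tau(r_1)$ becomes insufficient when $\tau(r_1) > \delta_1 + \tau(r_2)$. The resolution exploits the non-commutative phenomenon that $(q'_1 + e) \wedge r_2$ can strictly exceed $(q'_1 \wedge r_2) + (e \wedge r_2)$, picking up ``mixed'' vectors in $r_2$ that lie in the sum-projection $q'_1 + e$ but in neither $q'_1$ nor $e$ individually. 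A careful selection of $e$ using the continuous-case hypothesis, combined with Murray--von Neumann comparison for projections of equal trace in the finite von Neumann algebra, is what yields the required bound.
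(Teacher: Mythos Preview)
Your base-case strategy is sound in outline but your proposed construction of $Q^*$ does not close the ``main obstacle'' you yourself flag. Writing $Q^* = q'_1 + e$ with $e$ first filling $r_2 \wedge (1-q'_1)$ and then spilling into $(r_1 - r_2)\wedge(1-q'_1)$ gives no control over $\tau(Q^* \wedge r_2)$ in the non-commuting regime: when $q'_1$ and $r_2$ fail to commute, $q'_1 + (r_2 \wedge (1-q'_1))$ need \emph{not} dominate $r_2$, so the ``packing'' argument collapses. Your appeal to ``mixed vectors'' and Murray--von Neumann comparison is not a proof. The paper avoids this entirely by reversing the order of construction: first choose \emph{any} $q_2 \le r_2$ with $\tau(q_2)=\delta_2$, note $q_2 \vee q'_1 \le r_1$ has trace at most $\delta_1+\delta_2$, then use the continuity hypothesis to enlarge $q_2 \vee q'_1$ to a projection $f \le r_1$ of trace exactly $\delta_1+\delta_2$, and set $q_1 = f - q_2$. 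Since $q_2 \le f$ automatically, no intersection estimate is needed at all.

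Your inductive step also has a gap. After obtaining $q_k$ and $\tilde q_1$, you propose to decompose $\tilde q_1$ by applying the inductive hypothesis inside $\tilde q_1 M \tilde q_1$ to the chain $r_1 \ge \cdots \ge r_{k-1}$, but the $r_j$ do not lie under $\tilde q_1$; you would have to pass to $r_j \wedge \tilde q_1$, and the required trace bounds $\tau(r_j \wedge \tilde q_1) \ge \delta_{k-1}+\cdots+\delta_j$ are not available (nor are the $q'_j$ guaranteed to sit under $\tilde q_1$). The paper instead applies the inductive hypothesis directly to the tail data $\{r_2,\ldots,r_k;\, q'_2,\ldots,q'_{k-1}\}$ to obtain $q_2,\ldots,q_k$, and then constructs $q_1$ by the same enlarge-the-join trick as in the base case. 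This keeps all projections inside $r_1$ from the start and never requires compressing the $r_j$.
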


\begin{proof} The proof follows by induction. For $k=2$, choose $q_2 \le r_2$ such that $\tau(q_2)=\delta_2$.

Let $e=q_2 \vee q'_1$.

Then $\tau(e) \le \tau(q_2)+\tau(q'_1) = \delta_2 +\delta_1$ and $e \le r_1$.

But by the hypothesis for $k=2$, $\tau(r_1) \ge \delta_2+\delta_1$.

Hence by the `standing continuity assumption', there exists $f \in
\CP(M)$ such that $e \le f \le r_1$ and $\tau(f)=\delta_2+\delta_1$. 
In particular $q_2 \le e \le f$; thus $f-q_2 \in \CP(M)$ with trace $\delta_1$.

Choose $q_1=f-q_2$. Then $q_j \le r_j$ with trace $\delta_j$ for $j=1,2$
and $q_1+q_2 =f \ge e \ge q'_1$, as required.

\bigskip
Suppose now, for the inductive step, that this result holds with $k$
replaced by $k-1$, and that $r_1, \cdots ,r_k, q_1, \cdots , q_{k-1}$
are as in the statement of the Lemma.

By induction hypothesis - applied to $\{r_2,\cdots,r_k; q'_2, \cdots, q'_{k-1}\}
\subset \CP(M)$ - there exist mutually orthogonal projections
$q_2,\cdots,q_k$ in $M$ such that $q_j \le r_j$ and $\tau(q_j)=\delta_j, \forall 2 \le j \le k$
and 

\begin{align}\label{equation}
\sum_{j=2}^k q_j \ge \sum_{j=2}^{k-1} q'_j
\end{align}.

Let $e_2=q_2+\cdots+q_k$ and $e=e_2 \vee q'_1$.

Then $\tau(e) \le \tau(e_2)+\tau(q'_1) = (\delta_k+\cdots + \delta_2)+\delta_1$ and $e \le r_1$.

But $\tau(r_1) \ge  \delta_k+\cdots + \delta_1$; thus (by the `standing continuity assumption') there exists $f \in \CP(M)$ such that $e \le f \le r_1$
and $\tau(f)=\delta_k+\cdots + \delta_1$.
In particular $e_2 \le e \le f$; thus $f-e_2 \in \CP(M)$ with trace $\delta_1$.

Choose $q_1=f-e_2$.  Then $q_1 \le r_1$ and $q_1 \perp q_j$ for $2 \le j \le k$.

Moreover,
\begin{align*}
q_1+q_2+\cdots+q_k =f  \ge e  &= e_2 \vee q'_1 \\&= \sum_{j=2}^k q_j \vee q'_1 \\
& \ge \sum_{2}^{k-1}q'_j  \vee q'_1 ~\text{ by equation \ref{equation}} \\
&=\sum_{1}^{k-1}q'_j, \end{align*}
thus completing the proof of the inductive step. 
\end{proof}

Lemma \ref{l4} can be rewritten as:   

                                

\begin{lemma}\label{lemma}
  Let $(M,\tau)$ be as above. Suppose $\delta_j \in \R_+$, and $\{r_1
  \ge \cdots \ge r_k\} \subset \CP(M)$ such that $\tau(r_j) \ge
  \delta_k + \cdots \delta_j, \forall j=1,\cdots,k$ and suppose we are
  given $(k-1)$ mutually orthogonal projections $q_j'$ such
  that $q_j' \le r_j$ and $\tau(q'_j)=\delta_j~ \forall~j=1,\cdots,k-1$.  Let
\[ e'=q'_1 + \cdots + q'_{k-1} \le r_1.\]
Then there exist projections
$q \le r_1- e', q_j \le r_j~ \forall~j=1,\cdots,k$, such that $\tau(q)=\delta_k$ and $\tau(q_j)=\delta_j~ \forall~j$,
$\{q_j:1 \le j \le k\}$ pairwise mutually orthogonal
and
\begin{align*}\label{lemma1}
 q + e'=q_1 + \cdots + q_k,
\end{align*}
which is also a projection below $r_1$.
\end{lemma}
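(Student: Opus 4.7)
The plan is to obtain this lemma as an almost immediate corollary of Lemma \ref{l4}. The data $\{r_1,\dots,r_k;\,q'_1,\dots,q'_{k-1}\}$ here satisfies exactly the hypotheses of that lemma, and the conclusion here is just a convenient repackaging of the projections it produces, with $q$ playing the role of the "new mass of size $\delta_k$" that must be added to $e'$ to realize the total $\sum_j q_j$.

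First I would apply Lemma \ref{l4} to produce pairwise orthogonal projections $q_1,\dots,q_k \in \CP(M)$ satisfying $q_j \le r_j$, $\tau(q_j)=\delta_j$, and $\sum_{j=1}^{k} q_j \ge \sum_{j=1}^{k-1} q'_j = e'$. Next, I would define
\[
q := \sum_{j=1}^k q_j - e'.
\]
Since the $q_j$'s are mutually orthogonal, $\sum_j q_j$ is a projection; similarly $e'$ is a projection (the $q'_j$'s being mutually orthogonal). Because $e' \le \sum_j q_j$, the difference $q$ is a projection orthogonal to $e'$, and a direct trace computation gives $\tau(q) = \sum_{j=1}^{k}\delta_j - \sum_{j=1}^{k-1}\delta_j = \delta_k$.

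Finally I would verify the domination $q \le r_1 - e'$: each $q_j \le r_j \le r_1$, and mutual orthogonality of the $q_j$'s yields $\sum_j q_j \le r_1$, hence $q \le r_1$; combined with $q \perp e'$ this gives $q \le r_1 - e'$. The identity $q + e' = q_1 + \cdots + q_k$ then holds by definition, and $q+e'$ is a projection below $r_1$ since it equals $\sum_j q_j$.

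There is essentially no obstacle to overcome once Lemma \ref{l4} is in hand; the work has already been done there. The only verifications needed are the standard facts that a difference $p - e$ of projections with $e \le p$ is itself a projection orthogonal to $e$, together with the trace arithmetic. The content of this reformulation is purely presentational: it isolates the single projection $q$ of trace $\delta_k$ which, when adjoined to the preassigned $e'$, yields a total projection below $r_1$ decomposable as a mutually orthogonal sum $q_1 + \cdots + q_k$ with the prescribed traces.
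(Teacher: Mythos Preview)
Your proposal is correct and matches the paper's own proof exactly: the paper simply says ``Use Lemma \ref{l4} and choose $q=(q_1+\cdots+q_k)-e'$,'' and you have spelled out precisely this, together with the routine verifications.
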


\begin{proof}
Use Lemma \ref{l4} and choose $q=(q_1+\cdots+q_k)-e'$.
\end{proof}

Before proceeding further, we state a short but useful result:

\begin{lemma}\label{l1}
For $(M, \tau)$ as above and $r, e \in \CP(M)$,
\[\tau(r \wedge e^\perp) \geq \tau( r)- \tau(e)\]
where, of course, $e^\perp = 1-e$.
\end{lemma}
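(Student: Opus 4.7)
The plan is to reduce the claim, via De Morgan, to the standard trace identity for the join of two projections. Writing $e^\perp = 1 - e$ and using $(r \wedge e^\perp)^\perp = r^\perp \vee e$, I would rewrite the inequality as
\[\tau(r \wedge e^\perp) = 1 - \tau(r^\perp \vee e) \ge \tau(r) - \tau(e),\]
which is equivalent to
\[\tau(r^\perp \vee e) \le \tau(r^\perp) + \tau(e).\]

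The key step is therefore the standard subadditivity of the trace on joins of projections. I would deduce this either from the Kaplansky parallelogram law, which gives a Murray--von Neumann equivalence $p\vee q - q \sim p - p\wedge q$ and hence the identity
\[\tau(p\vee q) + \tau(p\wedge q) = \tau(p) + \tau(q),\]
or, even more directly, from $p \vee q \le p + q$ in the operator-inequality sense, applied under the positive linear functional $\tau$. Either way, applied with $p = r^\perp$ and $q = e$, I get $\tau(r^\perp \vee e) \le \tau(r^\perp) + \tau(e) = (1 - \tau(r)) + \tau(e)$.

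Plugging this into the reformulated inequality immediately yields
\[\tau(r \wedge e^\perp) \ge 1 - \bigl((1 - \tau(r)) + \tau(e)\bigr) = \tau(r) - \tau(e),\]
as desired. There is no real obstacle here; the only subtlety is remembering to pass to orthogonal complements so that the Kaplansky identity applies to the correct pair of projections.
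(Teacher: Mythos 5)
Your main route is correct and is essentially the paper's proof in mild disguise: the paper applies the Kaplansky parallelogram identity $\tau(p\vee q)+\tau(p\wedge q)=\tau(p)+\tau(q)$ directly to the pair $(r,e^\perp)$ and then uses $\tau(r\vee e^\perp)\le 1$, while you pass to complements and apply the same identity to $(r^\perp,e)$, dropping $\tau(r^\perp\wedge e)\ge 0$; these two final estimates are literally the same inequality, since $\tau(r^\perp\wedge e)=1-\tau(r\vee e^\perp)$.

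One warning, though: your proposed shortcut via the operator inequality $p\vee q\le p+q$ is not available, because that inequality is false for non-commuting projections. For example, in $M_2(\C)$ take $p$ the projection onto $\C e_1$ and $q$ the projection onto $\C(e_1+e_2)$; then $p\vee q=I$, while $p+q$ has an eigenvalue $1-\frac{1}{\sqrt 2}<1$, so $p\vee q\not\le p+q$. What survives is only the trace inequality $\tau(p\vee q)\le\tau(p)+\tau(q)$, and that is precisely the content of the parallelogram identity, so it cannot be obtained ``more directly.'' Stick with the Kaplansky route.
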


\begin{proof}
\begin{align*}
1 + \tau(r \wedge e^\perp) &\geq \tau( r \vee e^\perp) + \tau(r \wedge e^\perp)\\&= \tau( r) + 1 - \tau(e)
\end{align*}
as required.
\end{proof}

The above results lead to the following lemma:

\begin{lemma}\label{lemma2}
 Let $(M, \tau)$, $t_0^j, t_1^j, \delta_j$ be as in Wielandt's theorem. Let 
$\{r_1 \ge \cdots \ge r_k\}$ and $\{p_1 \le \cdots \le p_k\}$ be sets of projections in $M$ such that
$\tau(p_j) \ge F(t_1^j)$, $\tau(r_j) \ge 1-F(t_0^j)$ for all $1\le j\le k$.
Then there exist mutually orthogonal projections $q_j \le r_j$ and mutually orthogonal
projections $\tilde{q}_j \le p_j$ such that $\tau(q_j)=\tau(\tilde{q}_j)=\delta_j~\forall~j$
and $q_1 + \cdots + q_k = \tilde{q}_1 + \cdots + \tilde{q}_k$.
\end{lemma}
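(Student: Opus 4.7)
The plan is to prove the lemma by induction on $k$. The base case $k=1$ is immediate: Lemma \ref{l1} gives $\tau(r_1 \wedge p_1) \ge \tau(r_1) + \tau(p_1) - 1 \ge (1-F(t_0^1)) + F(t_1^1) - 1 = \delta_1$, and the continuity hypothesis produces a common projection $q_1 = \tilde{q}_1 \le r_1 \wedge p_1$ of trace $\delta_1$.

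For the inductive step, I would first construct mutually orthogonal $\tilde{q}_j \le p_j$ of trace $\delta_j$ for $j=1, \ldots, k$ by upward induction on $j$: at step $j$, since $\sum_{\ell<j} \tilde{q}_\ell \le p_{j-1} \le p_j$ and $\tau(p_j) \ge F(t_1^j) \ge \sum_{\ell \le j}\delta_\ell$, we have $\tau(p_j \wedge (\sum_{\ell<j}\tilde{q}_\ell)^\perp) = \tau(p_j) - \sum_{\ell < j} \delta_\ell \ge \delta_j$, so $\tilde{q}_j$ can be chosen by the continuity hypothesis. Setting $e := \sum_j \tilde{q}_j$, the lemma reduces to finding a decomposition $e = \sum_j q_j$ into mutually orthogonal $q_j \le r_j$ of trace $\delta_j$. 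Such a decomposition exists via a downward induction on $j$ using Lemma \ref{l1} (at step $j$, $\tau((e \wedge r_j) \wedge (q_{j+1} + \cdots + q_k)^\perp) \ge \tau(e \wedge r_j) - \sum_{\ell > j} \delta_\ell \ge \delta_j$), provided $e \le r_1$ and $\tau(e \wedge r_j) \ge \sum_{\ell \ge j}\delta_\ell$ for every $j$.

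The main obstacle is ensuring the intersection conditions $\tau(e \wedge r_j) \ge \sum_{\ell \ge j}\delta_\ell$, which need not hold for an arbitrary choice of the $\tilde{q}_j$'s. I would address this by refining the upward construction: at each step, choose $\tilde{q}_j$ not merely inside $p_j \wedge (\sum_{\ell<j} \tilde{q}_\ell)^\perp$ but positioned to sit inside $r_j$ as much as possible (ideally $\tilde{q}_j \le p_j \wedge r_j$, which has trace $\ge \delta_j$ by Lemma \ref{l1}), invoking Lemma \ref{lemma} to rebalance the previously-chosen $\tilde{q}_\ell$'s whenever direct placement is obstructed by orthogonality. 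The key feasibility input will be the Hall-type bound $\tau(r_j \wedge p_{j'}) \ge F(t_1^{j'}) - F(t_0^j) \ge \sum_{\ell=j}^{j'}\delta_\ell$ for $j \le j'$, which follows from Lemma \ref{l1} combined with the interleaving $t_1^\ell \le t_0^{\ell+1}$, guaranteeing the rebalancing surgery is always feasible. The hardest part is orchestrating the rebalancing so that at the end each $\tilde{q}_\ell$ contributes $\delta_\ell$ to $\tau(e \wedge r_\ell)$, thereby yielding the required decomposition into $q_j$'s.
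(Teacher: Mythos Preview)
Your base case is correct and identical to the paper's. The gap is in the inductive step: despite announcing induction on $k$, you never invoke the induction hypothesis, and instead attempt a direct two-phase construction (build all $\tilde q_j$'s, then decompose $e=\sum\tilde q_j$ into the $q_j$'s). You correctly identify that the decomposition phase requires $\tau(e\wedge r_j)\ge\sum_{\ell\ge j}\delta_\ell$ and that a naive choice of the $\tilde q_j$'s need not guarantee this. Your proposed fix --- placing each $\tilde q_j$ inside $p_j\wedge r_j$ and ``rebalancing'' via Lemma~\ref{lemma} when orthogonality obstructs this --- is not a proof: you do not specify what the rebalancing does, why it terminates, or why it yields the required intersection bounds. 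The Hall-type inequality $\tau(r_j\wedge p_{j'})\ge\sum_{\ell=j}^{j'}\delta_\ell$ is indeed the right combinatorial input, but turning it into a construction is exactly the content of the lemma, and you have not done so.

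The paper's argument avoids this difficulty with a single clean trick. Apply the induction hypothesis to $p_1\le\cdots\le p_{k-1}$ and $r_1\ge\cdots\ge r_{k-1}$, obtaining $q_j'\le r_j$ and $\tilde q_j\le p_j$ ($1\le j\le k-1$) with common sum $e'$. Since $e'=\sum\tilde q_j\le p_{k-1}\le p_k$, each $q_j'$ automatically lies in $\ell_j:=r_j\wedge p_k$. Lemma~\ref{l1} gives $\tau(\ell_j)\ge F(t_1^k)-F(t_0^j)\ge\delta_k+\cdots+\delta_j$, so Lemma~\ref{lemma} (applied with $\ell_j$ in place of $r_j$) produces $q\le\ell_1-e'$ and mutually orthogonal $q_j\le\ell_j\le r_j$ with $q+e'=q_1+\cdots+q_k$. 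Setting $\tilde q_k:=q\le p_k$ finishes the step. The point you are missing is that intersecting with $p_k$ \emph{before} invoking Lemma~\ref{lemma} simultaneously enforces the trace bounds and keeps the new piece $\tilde q_k$ inside $p_k$, eliminating any need for ad hoc rebalancing.
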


\begin{proof}
 The proof is by induction.
 
 For $k=1$, deduce from Lemma \ref{l1} that
 \begin{align*}
  \tau(p_1 \wedge r_1) &\ge \tau(p_1) - \tau(r_1^{\bot}) \\
                       &\ge \tau(p_1) -1 + \tau(r_1) \\
                       &\ge F(t_1^1) -1 +1-F(t_0^1)\\
                       &=F(t_1^1)-F(t_0^1)\\
                       &=\delta_1,
 \end{align*}
 and thus (by our standing `continuity assumption) there exists a projection $q_1=\tilde{q}_1 \le p_1 \wedge r_1$ of trace $\delta_1$.
 
 For the inductive step, assume $p_1 \le \cdots \le p_k, r_1 \ge
 \cdots r_k$ are as in the lemma and that the lemma is valid with $k$
 replaced by $k-1$. By the induction hypothesis applied to  $p_1 \le
 \cdots \le p_{k-1}, r_1 \ge \cdots \ge r_{k-1}$, there are mutually
 orthogonal projections $q_{j}'\le r_j$ and mutually 
orthogonal projections $\tilde{q}_j \le p_j$ such that
$\tau(q_j')=\tau(\tilde{q}_j)=\delta_j$ for all $j=1,\cdots,k-1$
and $\sum_{j=1}^{k-1} q'_j=\sum_{j=1}^{k-1} \tilde{q}_j=:e'$, say.

Then $e' \le p_{k-1} \le p_k$.

Let $\ell_j = r_j \wedge p_k, ~\forall~j=1,\cdots,k$.

Then $\ell_k \le \cdots \le \ell_1$. An application of Lemma \ref{l1}, as seen above in the $k=1$ case, gives:

\begin{align*}
 \tau(\ell_j) &\ge F(t_1^k)-F(t_0^j) \\
                           & \ge F(t_1^k)-F(t_0^k)+F(t_1^{k-1})-F(t_0^{k-1})+\cdots + F(t_1^j)-F(t_0^j) \\
                           & = \delta_k +\cdots + \delta_j~ \forall~j=1,\cdots,k.
\end{align*}

Now by Lemma \ref{lemma} - applied with $\ell_j$ in place of $r_j$ -
we may conclude that $\exists~q \le \ell_1 - e', q_j \le \ell_j ~(\le
r_j)$ with $\tau(q)=\delta_k$, 
$\tau(q_j)=\delta_j ~\forall ~j$ and
$q_j \bot q_i ~\forall~j \neq i$, such that $q + e' = q_1 + \cdots + q_k$.

But $q + e'=q + \tilde{q}_1 + \cdots + \tilde{q}_{k-1}$, where $\tilde{q}_{j} \le p_j~\forall~j=1,\cdots,k-1$
and $q \le \ell_1 - e' \le \ell_1=r_1 \wedge p_k$.

Choosing $\tilde{q}_k=q$, the proof of the inductive step is complete. 

\end{proof}

Now we are ready to prove Theorem \ref{wiel}.

\begin{proof}
 For ``$\ge$'' : we take
 $p_j:=1_{(-\infty, t_1^j)}(a)$ and $\tilde{q}_j:=1_{[t_0^j,
   t_1^j)}(a) \le p_j$, and deduce from Remark \ref{lemhal} that
   for each $1 \le j \le k$, we do have
 \beast \int_{F(t_0^j)  }^{F(t_0^j)} X_a dm &=& \tau(a\tilde{q}_j)\\
 &=& \max_{\substack{q \in \CP(M)\\q \le p_j\\ \tau(q) = \delta}} \tau(aq)~.
\eeast
 As the   $\tilde{q}_j. 1 \le j \le k$ are mutually perpendicular, we see that 
 \beast
 \sum_{j=1}^k  \int_{F(t_0^j)  }^{F(t_0^j)} X_a dm &=&  \sum_{j=1}^k \max_{\substack{q_j \in \CP(M)\\q_j \le p_j\\ \tau(q_j) = \delta}} \tau(aq_j)\\
 &\ge& \sup_{\substack{q_j \in \CP(M)\\q_j \le p_j\\ \tau(q_j) = \delta\\q_i \perp q_j \mbox{ for }i \neq j}} \sum_{j=}^k \tau(aq_j)
 \eeast
 and in particular,
 \bigskip
 \[\sum_{j=1}^k \int_{[F(t_0^j),F(t_1^j))} X(s) ~ ds \ge
\inf_{\substack{p_j \in \CP(M) \\ p_1 \le \cdots \le p_k \\ \tau(p_j) \ge F(t_1^j)}} ~
\sup_{\substack{\hat{q}_j \in \CP(M) \\ \hat{q}_j \le p_j \\ \tau(\hat{q}_j) = \delta_j \\ \hat{q}_j \bot \hat{q}_i \text{ for } j \neq i}} ~ \sum_{j=1}^k\tau(a\hat{q}_j).\]

For proving ``$\le$'' here,
let us choose any $p_1 \le \cdots \le p_k$ such that $p_j \in \CP(M)$ and $\tau(p_j) \ge F(t_1^j)$.

Let $r_j=1_{[t_0^j,\infty)}(a)~\forall~j=1,\cdots,k$. Then
$r_1 \ge \cdots \ge r_k$ with $\tau(r_j)=1-F(t_0^j)$.

Now by Lemma \ref{lemma2}, there exist mutually orthogonal projections $q_j \le r_j$ and
mutually orthogonal projections $\tilde{q}_j \le p_j$ with $\tau(q_j)=\tau(\tilde{q}_j) = \delta_j$
such that $q_1 + \cdots + q_k=\tilde{q}_1 + \cdots + \tilde{q}_k$.

Notice that by our version of Ky Fan's theorem,

\[\tau(aq_j) \ge \inf_{\substack{q \in \CP(M) \\ q \le r_j \\ \tau(q)=\delta_j}} \tau(aq)=\int_{F(t_0^j)}^{F(t_1^j)} X(s)~ ds.\]

Hence,
\[\sum_{j=1}^k \int_{F(t_0^j)}^{F(t_1^j)} X(s)~ ds  \le \sum_{j=1}^k \tau(aq_j)=\sum_{j=1}^k \tau(a\tilde{q}_j)\]

(since $q_1 + \cdots + q_k=\tilde{q}_1 + \cdots + \tilde{q}_k$),
where $\tilde{q}_j \in \CP(M), \tilde{q}_j \le p_j$ with $\tau(\tilde{q}_j)=\delta_j$ and $\tilde{q}_j \bot \tilde{q}_i$.

Hence,
\[\sum_{j=1}^k \int_{[F(t_0^j),F(t_1^j))} X(s) ~ ds \le
\sup_{\substack{\hat{q}_j \in \CP(M) \\ \hat{q}_j \le p_j \\ \tau(\hat{q}_j) = \delta_j \\ \hat{q}_j \bot \hat{q}_i \text{ for } j \neq i}} ~ \sum_{j=1}^k\tau(a\hat{q}_j).\]

As the $p_1 \le \cdots \le p_k$ were chosen arbitrarily, the proof of the theorem is complete.



\end{proof}



\section*{Acknowledgment}

It is a pleasure to record our appreciation of the very readable book
\cite{bhatia} by Rajendra Bhatia, whose proof of the matrix case of Wielandt's theorem we
could assimilate and adapt to the continuous case.
We also wish to thank Manjunath Krishnapur and Vijay
Kodiyalam for helpful discussions.

\bibliographystyle{amsalpha}
\bibliography{references}

\end{document}